\numberwithin{equation}{section}
\newtheorem{theorem}{Theorem}[section]
\newtheorem{lemma}{Lemma}[section]
\newtheorem{corollary}{Corollary}[section]
\newtheorem{proposition}{Proposition}[section]
\theoremstyle{definition}
\newtheorem{definition}{Definition}[section]
\newtheorem{remark}{Remark}[section]
\newtheorem{example}{Example}[section]
\theoremstyle{remark}
\date{}
\begin{document}

\title{Schwarz lemma from a K\"ahler manifold into a complex Finsler manifold}
\author{Jun Nie (jniemath@126.com)\\
School of Mathematical Sciences, Xiamen
University\\ Xiamen 361005, China\\
Chunping Zhong (zcp@xmu.edu.cn)\\
School of Mathematical Sciences, Xiamen
University\\ Xiamen 361005, China
}

\date{}
\maketitle

\begin{abstract}
 Suppose that $M$ is a K\"ahler manifold with a pole such that its holomorphic sectional curvature is bounded from below by a constant and its radial sectional curvature is also bounded from below. Suppose that $N$ is a strongly pseudoconvex complex Finsler manifold such that its holomorphic sectional curvature is bounded from above by a negative constant. In this paper, we establish a Schwarz lemma for holomorphic mappings $f$ form $M$ into $N$. As applications, we obtain a Liouville type rigidity result for holomorphic mappings $f$ from $M$ into $N$, as well as a rigidity theorem for bimeromorphic mappings from  a compact complex manifold into a compact complex Finsler manifold.
\end{abstract}

\textbf{Keywords:}  Schwarz lemma; K\"ahler manifold; complex Finsler manifold; holomorphic sectional curvature.

\textbf{MSC(2010):}  53C60, 53C56, 32H02.

\section{Introduction}

In analysis of one complex variable, the classical Schwarz lemma states that a holomorphic mapping from the open unit disk into itself decreases the Poincar\'e metric. It is known that the classical Schwarz lemma plays an important role in proving the Riemannian mapping theorem and in various topics of geometric function theory in complex analysis.
In 1938, Ahlfors \cite{ahlfors} gave a key generalization of the classic Schwarz lemma for holomorphic mappings from the unit disk into a Riemann surface that admits a Hermitian metric with Gauss curvature bounded from above by a negative constant, which open the door of generalizing Schwarz lemma from the viewpoint of differential geometry. In 1957-1958, Look \cite{La,Lb} gave a systematic study of Schwarz lemma and analytic invariants on the classic domains, from the viewpoints of both function theoretic and differential geometric.

The Schwarz lemma has become a powerful tool in geometry and analysis ever since  Yau's seminal paper \cite{yau2} which pushed this classic result in complex analysis to manifolds.
The general theme of the lemma goes something like this: given a holomorphic map $f$ from a complete complex manifold $M$ into a target complex manifold $N$, assume that $M$ has lower curvature bound $K_1$ and $N$ has upper curvature bound by a negative constant $K_2<0$. Then the pull-back via $f$ of the metric of $N$  is dominated by a multiple (which is typically in the form $\frac{K_1}{K_2}$) of the metric of $M$, with the multiple given by the curvature bounds. This type of results immediately imply Liouville type rigidity results when the multiple becomes zero.
 In Yau's original result \cite{yau2}, both $M$ and $N$ are assumed to be K\"ahler manifolds, where $M$ has holomorphic bisectional curvature bounded from below and $N$ has holomorphic sectional curvature bounded from above by a negative constant. Shortly after, Royden \cite{royden} realized that the curvature assumption on the domain could be reduced to holomorphic sectional curvature utilizing the symmetry of curvature tensor of K\"ahler metrics. Since then, various generalizations were made to Hermitian and almost Hermitian cases, we refer to Chen-Cheng-Lu \cite{chen}, Greene-Wu \cite{greene},  Lu-Sun \cite{LS}, Liu \cite{Liu}, Zuo \cite{zuo}, Tosatti \cite{tosatti}, Wu-Yau \cite{wy}, Yang-Zheng \cite{yang}, Ni \cite{Nia,Nib}, and many others.




Finsler geometry is Riemannian geometry without quadratic restrictions. Complex Finsler geometry is Hermitian geometry without Hermitian-quadratic restrictions which contains Hermitian geometry as its special case.
It is known that for any complex manifold, there are natural intrinsic pseudo-metrics, i.e., the Kobayashi and Carath$\acute{\mbox{e}}$odory pseudo-metrics.  In general, however, they are only complex Finsler metrics in nature (see \cite{abate}). 
A natural question in complex Finsler geometry one may ask is whether it is possible to generalize Schwarz lemma to more general differential metric spaces, i.e., to establish  Schwarz lemmas for holomorphic mappings between complex Finsler manifolds. In \cite{shen}, Shen and Shen obtained a Schwarz lemma from a compact complex Finsler manifold with holomorphic sectional curvature bounded from below by a negative constant into another complex Finsler manifold with holomorphic sectional curvature bounded above by a negative constant.  In the case that the source manifold is non-compact, Wan \cite{wan}
obtained a Schwarz lemma from a complete Riemann surface with curvature bounded from below by a constant into a complex Finsler manifold with  holomorphic sectional curvature bounded from above by a negative constant. While the general case when the source manifold is a complete non-compact strongly pseudoconvex complex Finsler manifold is still open.
It seems that the method used in \cite{wan} does not work when the source manifold has complex dimension $\geq 2$.

As a first step towards the above question in this paper we generalize Schwarz lemma to the case when the source manifold $M$ is a complete K\"ahler manifold and the target manifold $N$ is a complex manifold endowed with a strongly pseudoconvex complex Finsler metric $G:T^{1,0}N\rightarrow [0,+\infty)$ in the sense of Abate and Patrizio \cite{abate}. The main results are as follows.

\begin{theorem}\label{mth}
 Suppose that $(M,ds_M^2)$ is a complete K\"ahler manifold with holomorphic sectional curvature bounded from below by a constant $K_1$ and sectional curvature bounded from below, while $(N,H)$ is a strongly pseudoconvex complex Finsler manifold with holomorphic sectional curvature of the Chern-Finsler connection bounded from above by a constant $K_2<0$. Then any holomorphic map $f$ from $M$ into $N$ satisfies
 \begin{equation}
(f^*H)(z;dz) \leq \frac{K_1}{K_2}ds_M^2.
\end{equation}
\end{theorem}

The following example shows that there are lots of strongly pseudoconvex complex Finsler manifolds satisfying the assumption of $(N,H)$ in the above Theorem \ref{mth}. We refer the reader to the proof of Theorem \ref{example} for more details.
\begin{example}Let $N:=\mathbb{B}^n(\ell)=\{\|z\|^2<\ell^2\}$ be an open ball in $\mathbb{C}^n(n\geq 2)$ such that $\ell<\frac{1}{b}$ with $b$ an arbitrary positive constant. Let
$$
H(z;v)=\|v\|^2\exp\Big\{a\|z\|^2+b\frac{|\langle z,v\rangle|^2}{\|v\|^2}\Big\},\quad \forall z\in N,\forall 0\neq v\in T_z^{1,0}N,
$$
where $a$ is an arbitrary positive constant.
Then $H:T^{1,0}N\rightarrow \mathbb{R}^+$ is a non-Hermitian quadratic strongly pseudoconvex complex Finsler metric with holomorphic sectional curvature bounded from above by a negative constant.
\end{example}

As an application, we obtain the following theorem which generalizes a result of Yang and Zheng \cite{yang} from Hermitian to Finsler setting.
\begin{theorem}
Let $M$ be a compact complex manifold of complex dimension $n\geq 2$ which admits a strongly pseudoconvex complex Finsler metric $G$  with negative holomorphic curvature. Let $N$ be a compact complex manifold of the same complex dimension $n$ which admits a holomorphic fibration $f:N \rightarrow Z$, where a generic fiber is a compact K\"ahler manifold with holomorphic sectional curvature bounded form below by a non-negative constant. Then $M$ cannot be bimeromorphic to $N$.
\end{theorem}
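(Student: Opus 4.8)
The plan is to argue by contradiction. Suppose that $M$ is bimeromorphic to $N$, and fix a bimeromorphic map $\psi : N \dashrightarrow M$. Being bimeromorphic, $\psi$ is a dominant meromorphic map whose indeterminacy locus $I(\psi) \subset N$ is an analytic set of codimension $\geq 2$, and $\psi$ restricts to a biholomorphism between dense Zariski-open subsets of $N$ and $M$. Let $K_1 \geq 0$ denote the non-negative constant bounding from below the holomorphic sectional curvature of a generic fiber $F = f^{-1}(t)$, and let $K_2 < 0$ be the negative upper bound for the holomorphic curvature of $(M,G)$. Because the fibration $f : N \to Z$ has positive-dimensional fibers, $\dim_{\mathbb C} F = n - \dim_{\mathbb C} Z \geq 1$.

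Next I would restrict $\psi$ to a generic fiber $F$ and show that it is constant. For generic $t \in Z$ the fiber $F$ is a compact K\"ahler manifold not contained in $I(\psi)$, so $\psi|_F$ is a meromorphic map $F \dashrightarrow M$; after disposing of the codimension $\geq 2$ indeterminacy — for instance by resolving it, or by a removable-singularity argument exploiting that $F$ is K\"ahler and $M$ carries a negatively curved Finsler metric — one obtains a genuine holomorphic map $F \to M$. Here I invoke the compact case of the Schwarz lemma, namely Theorem~\ref{t-1.2} together with Remark~\ref{r-1.1} (equivalently Theorem~\ref{th-1.3}), with domain the compact K\"ahler manifold $F$ and target the strongly pseudoconvex complex Finsler manifold $(M,G)$. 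It yields
\[
(\psi|_F)^\ast G \;\leq\; \frac{K_1}{K_2}\, ds_F^2 .
\]
Since $K_1 \geq 0$ and $K_2 < 0$, the right-hand side is $\leq 0$, while the left-hand side is non-negative because $G$ is a Finsler metric; hence $(\psi|_F)^\ast G \equiv 0$. Thus $d(\psi|_F)=0$ on the connected manifold $F$, so $\psi|_F$ is constant.

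Finally I would extract the contradiction from a dimension count. Since $\psi$ contracts every generic fiber of $f$ to a point of $M$, it is constant along the fibers and therefore descends to a meromorphic map $\bar\psi : Z \dashrightarrow M$ with $\psi = \bar\psi \circ f$ on a dense open set. Consequently $\dim_{\mathbb C}\overline{\psi(N)} \leq \dim_{\mathbb C} Z = n - \dim_{\mathbb C} F < n$. This contradicts the fact that $\psi$, being bimeromorphic, is dominant and hence has dense image in the $n$-dimensional manifold $M$. Therefore no bimeromorphic map $N \dashrightarrow M$ can exist, which proves that $M$ is not bimeromorphic to $N$.

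I expect the main obstacle to be the second step: making rigorous that the meromorphic restriction $\psi|_F$ can be replaced by a holomorphic map to which the compact Schwarz lemma applies. The delicate point is that $I(\psi)$ may meet the generic fiber $F$, so one must either check that generic fibers avoid $I(\psi)$ — which is immediate when $\dim_{\mathbb C} F = 1$, since then $\dim_{\mathbb C} I(\psi) \leq n-2 < \dim_{\mathbb C} Z$, forcing $f(I(\psi))$ to be a proper subset of $Z$ — or control the extension of $\psi|_F$ across the indeterminacy set. Once constancy on the fibers is secured, the concluding dimension argument is entirely routine.
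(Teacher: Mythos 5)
Your overall strategy coincides with the paper's: argue by contradiction, restrict the bimeromorphic map to a generic fiber, apply the compact-domain Schwarz lemma (Theorem~\ref{t-1.2} with Remark~\ref{r-1.1}, i.e.\ Corollary~\ref{c-6.1}) with $K_1\geq 0$ and $K_2<0$ to force constancy on fibers, and then contradict dominance. The final dimension count you give is a perfectly acceptable (and slightly more explicit) substitute for the paper's step of simply choosing a generic fiber on which the map is non-constant. The genuine gap is exactly the one you flag in your last paragraph: you never actually produce a holomorphic map from the generic fiber $F$ into $M$, and neither of your proposed fallbacks closes it. Resolving the indeterminacy replaces $F$ by a modification $\tilde F\to F$, and while $\tilde F$ is still compact K\"ahler, it does not inherit a K\"ahler metric with holomorphic sectional curvature bounded below by a non-negative constant (blow-ups typically destroy such lower curvature bounds), so the Schwarz lemma no longer applies to $\tilde F\to M$. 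Your observation that generic fibers miss $I(\psi)$ handles only the case $\dim_{\mathbb C}F=1$; when $\dim_{\mathbb C}F\geq 2$ the indeterminacy locus can meet every fiber.

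The missing ingredient, which is the actual content of the paper's proof, is the Hartogs phenomenon for the target: by Theorem~\ref{T-3.3} (Shen--Shen), a complete --- in particular compact --- strongly pseudoconvex complex Finsler manifold with non-positive holomorphic sectional curvature obeys the Hartogs phenomenon, and consequently \emph{every} meromorphic map into $M$ is holomorphic. Applied to the bimeromorphic map $N\dashrightarrow M$ itself, this makes the map globally holomorphic on $N$, so its restriction to any fiber is automatically a holomorphic map $F\to M$ and the Schwarz lemma applies directly. Your parenthetical ``removable-singularity argument exploiting that $M$ carries a negatively curved Finsler metric'' is in fact pointing at precisely this statement, but it needs to be invoked as a theorem rather than hoped for; without it the proof does not go through in the case of higher-dimensional fibers.
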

\section{Perliminaries}
\setcounter{equation}{0}
\noindent

Let $M$ be a complex manifold of complex dimension $n$. Let $\{z^1,\cdots,z^n\}$ be a set of local complex coordinates, and let $\{\frac{\partial}{\partial z^{\alpha}}\}_{1 \leq \alpha \leq n}$ be the corresponding natural frame of $T^{1,0}M$. So that any non-zero element in $\tilde{M}=T^{1,0}M \setminus \{\text{zero section}\}$ can be written as
$$v=v^{\alpha}\frac{\partial}{\partial z^{\alpha}} \in \tilde{M},$$
where we adopt the summation convention of Einstein. In this way, one gets a local coordinate system on the complex manifold $\tilde{M}$:
$$(z;v)=(z^1,\cdots,z^n;v^1,\cdots,v^n).$$
\begin{definition}(\cite{abate,kobayashi})\label{d-2.1}
A complex Finsler metric $G$ on a complex manifold $M$ is a continuous function $G:T^{1,0}M \rightarrow [0,+\infty)$ satisfying

(i) $G$ is smooth on $\tilde{M}:=T^{1,0}M\setminus\{\mbox{zero section}\}$;

(ii) $G(z;v) \geq 0$ for all $v \in T_z^{1,0}M$ with $z \in M$ and $v \in \pi^{-1}(z)$, and $G(z;v)=0$ if and only if $v=0$;

(iii) $G(z;\zeta v)=|\zeta|^2G(z;v)$ for all $(z;v) \in T^{1,0}M$ and $\zeta \in \mathbb{C}$.
\end{definition}

\begin{definition}[\cite{abate}]
A complex Finsler metric $G$ is called strongly pseudoconvex if the Levi matrix
\begin{equation*}
(G_{\alpha \overline{\beta}})= \Big(\frac{\partial^2 G}{\partial v^{\alpha}\partial \overline{v}^{\beta}}\Big)
\end{equation*}
is positive definte on $\tilde{M}$.
\end{definition}
\begin{remark}[\cite{abate}]
Any $C^\infty$ Hermitian metric on a complex manifold $M$ is naturally a strongly pseudoconvex complex Finsler metric. Conversely,
if a complex Finsler metric $G$ on a complex manifold $M$ is $C^\infty$ over the whole holomorphic tangent bundle $T^{1,0}M$, then it is necessary a $C^\infty$ Hermitian metric. That is, for any $(z;v)\in T^{1,0}M$
$$
G(z;v)=g_{\alpha\overline{\beta}}(z)v^\alpha\overline{v}^\beta
$$
for a $C^\infty$ Hermitian tensor $g_{\alpha\overline{\beta}}$ on $M$.
For this reason, in general the non-trivial (non-Hermitian quadratic) examples of complex Finsler metrics are only required to be smooth over the slit holomorphic tangent bundle $\tilde{M}$.
\end{remark}

\begin{remark}\label{ff}
Taking a vector $v=dz^\alpha\frac{\partial}{\partial z^\alpha}\in T_z^{1,0}M$, and using the $(1,1)$-homogeneity property (iii) of $G$, we have
$$
G(z;dz)=G_{\alpha\overline{\beta}}(z;dz)dz^\alpha\overline{dz^\beta}.
$$
Namely, the first fundamental form $ds_M^2$ of a strongly pseudoconvex complex Finsler metric $G:T^{1,0}M\rightarrow \mathbb{R}$ on a complex manifold $M$ can expressed as
$$ds_M^2=G_{\alpha\overline{\beta}}(z;dz)dz^\alpha\overline{dz^\beta},$$
which in general is not a Hermitian quadratic form of $dz=(dz^1,\cdots,dz^n)$.
\end{remark}

In the following, we use the notions in Abate and Patrizio \cite{abate}. We shall denote by indexes like $\alpha, \overline{\beta}$ and so on the derivatives with respect to the $v$-coordinates; for instance,
\begin{equation*}
G_{\alpha \overline{\beta}}= \frac{\partial^2 G}{\partial v^{\alpha}\partial \overline{v}^{\beta}}.
\end{equation*}
On the other hand, the derivatives with respect to the $z$-coordinates will be denoted by indexes after a semicolon; for instance,
\begin{equation*}
G_{;\mu \overline{\nu}}= \frac{\partial^2 G}{\partial z^{\mu}\partial \overline{z}^{\nu}}\quad \text{or}\quad G_{\alpha;\overline{\nu}}= \frac{\partial^2 G}{\partial \overline{z}^{\nu}\partial v^\alpha}.
\end{equation*}

Using the projective map  $\pi:T^{1,0}M\rightarrow M$, which is a holomorphic mapping, one can define the holomorphic vertical bundle
\begin{equation*}
\mathcal{V}^{1,0}:=\ker{d\pi} \subset T^{1,0}\tilde{M}.
\end{equation*}
It is obvious that $\{\frac{\partial}{\partial v^1},\cdots,\frac{\partial}{\partial v^n}\}$ is a local frame for $\mathcal{V}^{1,0}$.

The complex horizontal bundle of type $(1,0)$ is a complex subbundle $\mathcal{H}^{1,0} \subset T^{1,0}\tilde{M}$ such that
\begin{equation*}
T^{1,0}\tilde{M}=\mathcal{H}^{1,0} \oplus \mathcal{V}^{1,0}.
\end{equation*}
Note that $\{\delta_1,\cdots,\delta_n\}$ is a local frame for $\mathcal{H}^{1,0}$, where
\begin{equation*}
\delta_{\alpha}= \partial_\alpha-\Gamma_{;\alpha}^{\beta}\dot{\partial}_\beta, \quad \Gamma_{;\alpha}^{\beta}:= G^{\beta \overline{\gamma}}G_{\overline{\gamma};\alpha}.
\end{equation*}
Here and in the following, we write $\partial_\alpha:=\frac{\partial}{\partial z^\alpha}$ and $\dot{\partial}_\beta:=\frac{\partial}{\partial v^\beta}$.

For a holomorphic vector bundle whose fiber metric is a Hermitian metric, there is naturally associated a unique complex linear connection (the Chern connection or Hermitian connection) with respect to which the metric tensor is parallel. Since each strongly pseudoconvex complex Finsler metric $G$ on a complex manifold $M$ naturally induces a Hermitian metric on the complex vertical bundle $\mathcal{V}^{1,0}$. It follows that there exists a unique good complex vertical connection $D:\mathcal{X}(\mathcal{V}^{1,0})\rightarrow \mathcal{X}(T_{\mathbb{C}}^\ast\tilde{M}\otimes \mathcal{V}^{1,0})$ compatible with the Hermitian structure in $\mathcal{V}^{1,0}$. This connection is called the Chern-Finsler connecction (see \cite{abate}). The connection $1$-form is
\begin{equation*}
\omega_{\beta}^{\alpha}:=G^{\alpha \overline{\gamma}}\partial G_{\beta \overline{\gamma}}= \Gamma_{\beta;\mu}^{\alpha}dz^{\mu}+\Gamma_{\beta \gamma}^{\alpha} \psi^{\gamma},
\end{equation*}
where
\begin{equation*}
 \Gamma_{\beta;\mu}^{\alpha} =G^{\overline{\tau}\alpha}\delta_{\mu}(G_{\beta \overline{\tau}}),\quad\Gamma_{\beta \gamma}^{\alpha}=G^{\overline{\tau}\alpha}G_{\beta \overline{\tau} \gamma},\quad \psi^{\alpha}= dv^{\alpha}+\Gamma_{;\mu}^{\alpha}dz^{\mu}.
\end{equation*}
The curvature form of the Chern-Finsler connection $D$ associated to $G$ is given by
\begin{equation*}
\Omega^{\alpha}_{\beta}:=R^{\alpha}_{\beta;\mu\overline{\nu}}dz^{\mu}\wedge d\overline{z}^{\nu}+R^{\alpha}_{\beta \delta;\overline{\nu}}\psi^{\delta} \wedge d\overline{z}^{\nu}+R^{\alpha}_{\beta \overline{\gamma};\mu}dz^{\mu}\wedge \overline{\psi^{\gamma}}+
R^{\alpha}_{\beta \delta \overline{\gamma}}\psi^{\delta}\wedge \overline{\psi^{\gamma}},
\end{equation*}
where
\begin{eqnarray*}
R^{\alpha}_{\beta;\mu\overline{\nu}}&=& -\delta_{\overline{\nu}}(\Gamma^{\alpha}_{\beta;\mu})-\Gamma^{\alpha}_{\beta\sigma}\delta_{\overline{\nu}}(\Gamma^{\sigma}_{;\mu}),\\
R^{\alpha}_{\beta\delta;\overline{\nu}} &=& -\delta_{\overline{\nu}}(\Gamma^{\alpha}_{\beta\delta}),\\
R^{\alpha}_{\beta\overline{\gamma};\mu}&=&-\dot{\partial}_{\overline{\gamma}}(\Gamma^{\alpha}_{\beta;\mu})-\Gamma^{\alpha}_{\beta\sigma}\Gamma^{\sigma}_{\overline{\gamma};\mu},\\
R^{\alpha}_{\beta\delta\overline{\gamma}}&=&-\dot{\partial}_{\overline{\gamma}}(\Gamma^{\alpha}_{\beta\delta}).
\end{eqnarray*}

 If $(M,h)$ is a Hermitian manifold. Under a local holomorphic coordinate system $(z^1,\cdots,z^n)$ on $M$, the curvature tensor of the Chern connection of $(M,h)$ has components
\begin{equation*}
R_{\alpha \bar{\beta}\gamma \bar{\delta}}= -\frac{\partial^2 h_{\gamma\bar{\delta}}}{\partial z^{\alpha} \partial {\bar{z}}^{\beta}} +h^{\bar{\lambda}\rho}\frac{\partial h_{\gamma\bar{\lambda}}}{\partial z^{\alpha}}\frac{\partial h_{\rho \bar{\delta}}}{\partial \bar{z}^{\beta}}.
\end{equation*}

For a non-zero tangent vector $v\in T^{1,0}_zM$, one defines the holomorphic sectional curvature $K_h$ of $h$  along $v$ as follows (see \cite{zheng}).
\begin{equation*}
K_h(v)=\frac{2R_{\alpha \bar{\beta}\gamma\bar{\delta}}v^{\alpha}\bar{v}^{\beta}v^{\gamma}\bar{v}^{\delta}}{h^2(v)}.
\end{equation*}
\begin{definition}(\cite{abate})
Let $\mu =h d\zeta \otimes d \bar{\zeta}$ be a Hermitian metric defined in a neighborhood of the origin in $\mathbb{C}$. Then the Gaussian curvature $K(\mu)(0)$ of $\mu$ at the origin is given by
\begin{equation*}
K(\mu)(0) = -\frac{1}{2h(0)}(\Delta \log h)(0),
\end{equation*}
where $\Delta$ denotes the usual Laplacian
\begin{equation*}
\Delta u = 4 \frac{\partial^2 u}{\partial \zeta \partial \bar{\zeta}}.
\end{equation*}
\end{definition}
The following lemma shows that the holomorphic sectional curvature $K_h(v)$ of a Hermitian metric $h$ at a point $z\in M$ in the direction $v$ can be realized by the Gaussian curvature of the induced metric on a $1$-dimensional complex submanifold $S$ passing $z$ and tangent in the direction $v$.
\begin{lemma}(\cite{wuh})\label{l-2.1}
Let $(M,h)$ be a Hermitian manifold, and let $v$ be a unit tangent vector to $M$ at $z$. Then there exists an imbedded $1$-dimensional complex submanifold $S$ of $M$ tangent to $v$ such that the Gaussian curvature of $S$ at $z$
relative to the induced metric equals the holomorphic sectional curvature $K_h$ of $h$ at $z$ in the direction $v$.
\end{lemma}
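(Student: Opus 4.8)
The plan is to realize the submanifold $S$ as a coordinate slice in a carefully chosen holomorphic coordinate system, so that the induced metric becomes a one-dimensional Hermitian metric whose Gaussian curvature can be read off directly from the formula $K(\mu)(0)=-\frac{1}{2h(0)}(\Delta\log h)(0)$ recalled above and then compared with $K_h(v)$ through the Chern curvature expression for $R_{\alpha\overline{\beta}\gamma\overline{\delta}}$. First I would choose local holomorphic coordinates $(z^1,\dots,z^n)$ centered at $z$ such that $v=\frac{\partial}{\partial z^1}$ at $z$ and, after a unitary linear change, $h_{\alpha\overline{\beta}}(0)=\delta_{\alpha\beta}$; since $v$ is a unit vector this is consistent with $h_{1\overline{1}}(0)=1$. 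The natural candidate for $S$ is then the slice $\{z^2=\cdots=z^n=0\}$, an embedded one-dimensional complex submanifold through $z$ tangent to $v$, carrying the induced metric $\mu=\tilde h\,d\zeta\otimes d\overline{\zeta}$ with $\zeta=z^1$ and $\tilde h(\zeta)=h_{1\overline{1}}(\zeta,0,\dots,0)$.

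A direct computation using $K(\mu)(0)=-\frac{1}{2\tilde h(0)}\Delta\log\tilde h(0)$ and $\Delta=4\,\partial_\zeta\partial_{\overline\zeta}$ gives $K(\mu)(0)=-2\big(\partial_1\partial_{\overline 1}h_{1\overline 1}(0)-|\partial_1 h_{1\overline 1}(0)|^2\big)$, while the Chern curvature formula together with $h^{\overline\lambda\rho}(0)=\delta^{\lambda\rho}$ yields $R_{1\overline 1 1\overline 1}=-\partial_1\partial_{\overline 1}h_{1\overline 1}(0)+\sum_{\lambda}|\partial_1 h_{1\overline{\lambda}}(0)|^2$ and $K_h(v)=2R_{1\overline 1 1\overline 1}$. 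Comparing the two expressions, one finds $K(\mu)(0)=K_h(v)-2\sum_{\lambda\geq 2}|\partial_1 h_{1\overline{\lambda}}(0)|^2$, so equality holds precisely when $\partial_1 h_{1\overline{\lambda}}(0)=0$ for every $\lambda\geq 2$ (the cross term $|\partial_1 h_{1\overline 1}(0)|^2$ then also drops out of both sides).

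The key step, and the main obstacle, is therefore to arrange $\partial_1 h_{1\overline{\lambda}}(0)=0$ for all $\lambda$. In the K\"ahler case one would simply pass to normal coordinates killing all first derivatives, but for a general, non-K\"ahler Hermitian metric this is impossible, and the naive slice gives only the inequality $K(\mu)(0)\leq K_h(v)$. I would remove exactly the obstructing derivatives by a quadratic change of coordinates $z^\alpha\mapsto z^\alpha-\frac12 a^\alpha_{\beta\gamma}z^\beta z^\gamma$ with $a^\alpha_{\beta\gamma}$ symmetric in $\beta,\gamma$; under such a change the transformed metric satisfies $\partial_1 h_{1\overline{\nu}}(0)\mapsto\partial_1 h_{1\overline{\nu}}(0)-a^\nu_{11}$, and the choice $a^\nu_{11}=\partial_1 h_{1\overline{\nu}}(0)$ is admissible precisely because the two lower indices coincide and hence satisfy the required symmetry automatically. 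This coordinate change fixes the origin with identity linear part, so it preserves both $v=\frac{\partial}{\partial z^1}$ and $h_{\alpha\overline{\beta}}(0)=\delta_{\alpha\beta}$; the computation of the previous paragraph then applies verbatim and delivers $K(\mu)(0)=K_h(v)$, which completes the argument. The only delicate point worth stating carefully is this admissibility of the quadratic correction, since it is exactly what distinguishes the Hermitian (possibly non-K\"ahler) case from the K\"ahler one.
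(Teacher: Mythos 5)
Your proof is correct: the curvature computation on the coordinate slice, the resulting identity $K(\mu)(0)=K_h(v)-2\sum_{\lambda\ge 2}|\partial_1 h_{1\overline{\lambda}}(0)|^2$, and the observation that a quadratic holomorphic coordinate change can prescribe $a^{\nu}_{11}$ freely (the symmetry of $a^{\alpha}_{\beta\gamma}$ being vacuous when the two lower indices coincide) are all sound, and this is precisely the quadratic-osculation argument of Wu's original paper. The present paper does not prove Lemma \ref{l-2.1} at all but simply cites \cite{WH}, so your write-up supplies the standard proof rather than competing with one in the text.
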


For a strongly pseudoconvex complex Finsler metric $G$ on $M$,  one can also introduce the notion of holomorphic sectional curvature.
\begin{definition}[\cite{abate}]
Let$(M,G)$ be a strongly pseudoconvex complex Finsler metric on a complex manifold $M$, and take $v \in \tilde{M}$. Then the holomorphic sectional curvature $K_G(v)$ of $G$ along a non zero tangent vector $v$ is given by
\begin{equation*}
K_G(v)=\frac{2}{G(v)^2}\langle\Omega(\chi,\bar{\chi})\chi,\chi\rangle_v.
\end{equation*}
where $\chi=v^\alpha\delta_\alpha$ is the complex radial horizontal vector field and $\Omega$ is the curvature tensor of the Chern-Finsler connection associated to $(M,G)$.
\end{definition}

In complex Finsler geometry, Abate and Patrizio\cite{abate} (see also Wong and Wu\cite{wong}) proved that the holomorphic sectional curvature of $G$ at a point $z \in M$ along a tangent direction $v \in T_z^{1,0}M$ is the maximum of the Gaussian curvatures of the induced Hermitian metrics among all complex curves in $M$ which pass through $z$ and tangent at $z$ in the direction $v$.
\begin{lemma}(\cite{abate,wong})\label{l-2.2}
Let $(M,G)$ be a complex Finsler manifold, $v \in T^{1,0}_zM$ be a nonzero tangent vector tangent at a point $z \in M$. Let $\mathcal{C}$ be the set of complex curves in $M$ passing through $z$ which are tangent to $v$ at $z$. Then the holomorphic sectional curvature $K_G(v)$ of $G$ satisfies the condition
\begin{equation*}
K_G(v)=\max_{S \in \mathcal{C}}K(S)(z),
\end{equation*}
where $K(S)$ is the Gaussian curvature of the complex curve $S$ with the induced metric.
\end{lemma}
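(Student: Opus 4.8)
The plan is to reduce the statement to a finite-dimensional optimization over the second-order jets of curves through $z$. I would parametrize any $S \in \mathcal{C}$ by a holomorphic map $\varphi\colon \Delta \to M$ with $\varphi(0) = z$ and $\tfrac{\partial \varphi}{\partial \zeta}(0) = v$; since $K(S)(z)$ is invariant under biholomorphic reparametrization of $\Delta$ and $K_G$ depends only on the direction by homogeneity, this normalization is harmless, and the only remaining freedom affecting $K(S)(z)$ is the second jet $W := \tfrac{\partial^2\varphi}{\partial\zeta^2}(0)\in\mathbb{C}^n$. The induced metric on $S$ is $\varphi^\ast G = h(\zeta)\,d\zeta\otimes d\bar\zeta$ with $h(\zeta) = G\big(\varphi(\zeta);\tfrac{\partial\varphi}{\partial\zeta}(\zeta)\big)$, so by the definition of Gaussian curvature $K(S)(z) = -\tfrac{2}{h^2}h_{\zeta\bar\zeta}(0) + \tfrac{2}{h^3}|h_\zeta(0)|^2$.

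Next I would compute $h_\zeta(0)$ and $h_{\zeta\bar\zeta}(0)$ by the chain rule, using that $\varphi$ is holomorphic (so $\partial_{\bar\zeta}\varphi^\mu=0$) together with the $2$-homogeneity identities $G_\alpha v^\alpha = G$, $G_{\alpha\bar\beta}v^\alpha = G_{\bar\beta}$ and $G_{\alpha\bar\beta}\bar v^\beta = G_\alpha$. This exhibits $K(S)(z)$ as a real function $Q(W)$ whose $W$-dependence is affine-quadratic, with quadratic part equal to $-\tfrac{2}{h^2}\big(\|W\|_G^2 - |\langle W,v\rangle_G|^2/\|v\|_G^2\big)$, where $\|\cdot\|_G$ and $\langle\cdot,\cdot\rangle_G$ are taken with respect to the Levi matrix $G_{\alpha\bar\beta}(z;v)$. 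Strong pseudoconvexity makes this matrix positive definite, so Cauchy--Schwarz shows the quadratic part is negative semidefinite, vanishing exactly when $W\parallel v$ (the direction corresponding to reparametrizing $S$). Hence $Q$ is concave and attains a maximum.

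Solving $\partial_{\bar W^\beta}Q = 0$ then gives the critical jet $W^\alpha = \tfrac{h_\zeta}{h}v^\alpha - \Gamma^\alpha_{;\mu}v^\mu$; discarding the $v$-component, the optimal second jet is the horizontal lift $W_0^\alpha = -\Gamma^\alpha_{;\mu}v^\mu$, i.e. the unique jet for which the lifted curve $\zeta\mapsto(\varphi(\zeta),\varphi'(\zeta))$ is tangent to the horizontal distribution $\mathcal{H}^{1,0}$ at $0$. A genuine $S\in\mathcal{C}$ realizing this jet exists: in a coordinate chart one may take $\varphi(\zeta) = z + \zeta v + \tfrac12\zeta^2 W_0$, which is an embedding for small $\zeta$.

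Finally I would substitute $W = W_0$ into $Q$ and identify the resulting value with $K_G(v)$; this is the crux and the main obstacle. Because the lift of the chosen curve is horizontal at $0$, the computation of $h_{\zeta\bar\zeta}(0)$ becomes a purely horizontal-radial second derivative, which should reproduce exactly $\tfrac{2}{G^2}\langle\Omega(\chi,\bar\chi)\chi,\chi\rangle_v$ along $\chi = v^\alpha\delta_\alpha$. Verifying this requires expanding the Chern--Finsler curvature components $R^\alpha_{\beta;\mu\bar\nu}$, $R^\alpha_{\beta\delta;\bar\nu}$, $R^\alpha_{\beta\bar\gamma;\mu}$, $R^\alpha_{\beta\delta\bar\gamma}$, contracting, and checking via the homogeneity identities that the vertical and mixed terms of $\Omega$ account precisely for the $G^{\alpha\bar\beta}$-corrections produced by setting $W=W_0$. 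Combined with the concavity estimate, this yields $K(S)(z)\le K_G(v)$ for every $S$, with equality for the horizontal-lift curve, which is the assertion $K_G(v)=\max_{S\in\mathcal{C}}K(S)(z)$.
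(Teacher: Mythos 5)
The paper does not actually prove Lemma~\ref{l-2.2}: it is quoted from Abate--Patrizio and Wong--Wu, so there is no in-paper argument to compare against. Your outline reproduces the standard proof from those sources, and the parts you write out are correct: $K(S)(z)$ depends on $S$ only through the $2$-jet of a parametrization, $K(S)(z)=-\tfrac{2}{h^2}h_{\zeta\bar\zeta}(0)+\tfrac{2}{h^3}|h_\zeta(0)|^2$ is affine-quadratic in $W=\varphi''(0)$ with quadratic part $-\tfrac{2}{h^2}\bigl(\|W\|_G^2-|\langle W,v\rangle_G|^2/\|v\|_G^2\bigr)$ by the homogeneity identities $G_\alpha=G_{\alpha\bar\beta}\bar v^\beta$ and $h=G_{\alpha\bar\beta}v^\alpha\bar v^\beta$, the Levi form is positive definite by strong pseudoconvexity, the critical set is $W=-\Gamma^\alpha_{;\mu}v^\mu+\lambda v^\alpha$ with the $\lambda v$-freedom accounted for by reparametrization, and the polynomial disc $z+\zeta v+\tfrac12\zeta^2W_0$ realizes the horizontal jet.

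The gap is exactly where you flag it: the identity $Q(W_0)=\tfrac{2}{G^2}\langle\Omega(\chi,\bar\chi)\chi,\chi\rangle_v$ is described but never verified. Since the lemma \emph{is} the assertion that the extrinsic maximum coincides with the Chern--Finsler holomorphic sectional curvature, what you have proved is only $\max_{S\in\mathcal{C}}K(S)(z)=Q(W_0)$ for an explicit but unidentified quantity; the identification is the content-bearing half and cannot be left as ``should reproduce exactly.'' It is, however, less painful than your sketch suggests. At the horizontal jet one has $h_\zeta(0)=G_{;\mu}v^\mu-G_\alpha\Gamma^\alpha_{;\mu}v^\mu=0$, because $G_\alpha G^{\bar\tau\alpha}=\bar v^\tau$ and $G_{\bar\tau;\mu}\bar v^\tau=G_{;\mu}$ by homogeneity; hence the $|h_\zeta|^2$ term drops out and
\[
Q(W_0)=-\frac{2}{G^2}\Bigl(G_{;\mu\bar\nu}-G_{\alpha;\bar\nu}\Gamma^{\alpha}_{;\mu}-G_{\bar\beta;\mu}\overline{\Gamma^{\beta}_{;\nu}}+G_{\alpha\bar\beta}\Gamma^{\alpha}_{;\mu}\overline{\Gamma^{\beta}_{;\nu}}\Bigr)v^{\mu}\bar v^{\nu}.
\]
What remains is to check, using the same homogeneity identities, that this bracket contracted with $v^\mu\bar v^\nu$ equals $-G_{\alpha}R^{\alpha}_{\beta;\mu\bar\nu}v^{\beta}v^{\mu}\bar v^{\nu}$; note that only the $dz^\mu\wedge d\bar z^\nu$ block of $\Omega$ survives contraction with $\chi$ and $\bar\chi$, since $\psi^\alpha(\delta_\mu)=0$. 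Until that computation is written out, the proof is incomplete at its crux, even though the strategy and every step you did carry out are sound.
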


\section{Estimation of distance function on Riemannian manifolds}

In this section, we follow the notations in \cite{cheeger}. We introduce the definitions of Hessian and Morse index form in Riemannian geometry. Then we obtain an estimation of the distance function as well as an equality which establishes a relationship between the Hessian of the distance function and the Morse index form on a Riemannian manifold, see Proposition \ref{p-3.1}. Base on this, we obtain an inequality which relates the real Hessian of the distance function and the radial sectional curvature of the Riemannian metric, see Theorem \ref{t-3.2}.

Note that the Hessian of a smooth function $f$ on a Riemannian manifold $(M,g)$ (see \cite{zheng}) is defined by
$$H(f)(X,Y)=X(Yf)-(\nabla_XY)f$$
for any two vector fields $X,Y$ on $M$, where $\nabla$ is the Levi-Civita connection on $(M,g)$.

It is easy to see that $H(f)$ is a symmetric 2-tensor, i.e.,
$$H(f)(Y,X)=H(f)(X,Y)\quad \mbox{and} \quad H(f)(hX,Y)=hH(f)(X,Y)$$
for any smooth function $h$ on $M$.

Now we introduce the second variation of the length integral on a Riemannian manifold, we refer to \cite{cheeger} (see also \cite{zheng}). Let $\gamma:[0,r] \rightarrow M$ be a geodesic, and let $\alpha: Q \rightarrow M$ be a smooth map, where $Q$ is the rectangular solid $[0,r]\times (-\varepsilon,\varepsilon)\times(-\delta,\delta)$ and $\alpha(t,0,0)=\gamma(t)$ for $t\in[0,r]$. This means that $\alpha$ is a 2-parameter variation of the geodesic $\gamma$. Let's take a look at the arc-length function obtained by  successively differentiated with respect to these two parameters. Let $L(s,w)$ be the arc length of the curve $t \rightarrow \alpha(t,s,w)$. Denote $T:=\alpha_\ast(\frac{\partial}{\partial t})$. Then
$$L(s,w)=\int_0^r \|T\|dt,$$
where $\|T\|^2=\langle T |T \rangle$  and $\langle\cdot | \cdot\rangle$ is the Riemannian inner product induced by $g$.

Assume from now on that $\|\dot{\gamma}\|\equiv1$. This means that $\gamma:[0,r] \rightarrow M$ is a normal geodesic. In the following we denote
$S:=\alpha_\ast(\frac{\partial}{\partial s})$ and $W:=\alpha_\ast(\frac{\partial}{\partial w}).$
\begin{theorem}(\cite{cheeger})
Let $(M,g)$ be a Riemannian manifold. Take a normal geodesic $\gamma:[0,r] \rightarrow M$, and let $\alpha:[0,r]\times (-\varepsilon,\varepsilon)\times(-\delta,\delta) \rightarrow M$ be a two-parameter variation of the geodesic $\gamma$. Then
\begin{equation*}\begin{split}
\frac{\partial^2L}{\partial w \partial s}\Big|_{(0,0)}
&=\langle \nabla_WS|T\rangle|_0^r+\int_0^r\{\langle \nabla_TS|\nabla_TW\rangle-\langle R(W,T)T|S \rangle-T\langle S|T\rangle T\langle W|T\rangle\}dt,
\end{split}\end{equation*}
where $R$ is the Riemannian curvature tensor on $(M,g)$.
\end{theorem}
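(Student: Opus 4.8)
The plan is to differentiate the arc-length integral twice, using only the two defining properties of the Levi-Civita connection $\nabla$, namely metric compatibility and the vanishing of torsion, together with the structural fact that $T$, $S$, $W$ are the push-forwards under $\alpha$ of the coordinate vector fields $\frac{\partial}{\partial t}$, $\frac{\partial}{\partial s}$, $\frac{\partial}{\partial w}$ on the rectangular solid $Q$. Since coordinate fields commute, their images satisfy $[T,S]=[T,W]=[S,W]=0$, and torsion-freeness then yields the symmetries $\nabla_T S=\nabla_S T$, $\nabla_T W=\nabla_W T$, and $\nabla_W S=\nabla_S W$, which will be invoked repeatedly.

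First I would compute the inner derivative. Writing $L(s,w)=\int_0^r\langle T|T\rangle^{1/2}\,dt$ and applying metric compatibility, one has $S\langle T|T\rangle=2\langle\nabla_S T|T\rangle$, so that together with $\nabla_S T=\nabla_T S$ one obtains
\[
\frac{\partial L}{\partial s}=\int_0^r\frac{\langle\nabla_T S|T\rangle}{\|T\|}\,dt .
\]
Next I would differentiate this expression in $w$ by the quotient rule, using $W\|T\|=\langle\nabla_T W|T\rangle/\|T\|$, which leads to an integrand of the form $\|T\|^{-1}W\langle\nabla_T S|T\rangle-\|T\|^{-2}\langle\nabla_T S|T\rangle\langle\nabla_T W|T\rangle$. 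The decisive step is to expand $W\langle\nabla_T S|T\rangle=\langle\nabla_W\nabla_T S|T\rangle+\langle\nabla_T S|\nabla_W T\rangle$ and then to bring in the curvature through $\nabla_W\nabla_T S=\nabla_T\nabla_W S+R(W,T)S$, which is legitimate precisely because $[W,T]=0$. Invoking the antisymmetry $\langle R(W,T)S|T\rangle=-\langle R(W,T)T|S\rangle$ then converts this into the curvature term $-\langle R(W,T)T|S\rangle$ appearing in the statement, while $\langle\nabla_T S|\nabla_W T\rangle=\langle\nabla_T S|\nabla_T W\rangle$ supplies the first term of the integrand.

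Finally I would evaluate everything at $(s,w)=(0,0)$, where $\gamma$ is a normal geodesic, so that $\|T\|=1$ and $\nabla_T T=0$. The geodesic condition simplifies $\langle\nabla_T\nabla_W S|T\rangle=T\langle\nabla_W S|T\rangle$, and likewise $\langle\nabla_T S|T\rangle=T\langle S|T\rangle$ and $\langle\nabla_T W|T\rangle=T\langle W|T\rangle$, which produces the product term $-\,T\langle S|T\rangle\,T\langle W|T\rangle$. The surviving piece $T\langle\nabla_W S|T\rangle$ is a total derivative in $t$, whose integral yields the boundary contribution $\langle\nabla_W S|T\rangle\big|_0^r$. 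Collecting the three remaining pieces reproduces the claimed identity exactly.

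The main obstacle is not any single computation but the careful bookkeeping: one must track the curvature symmetries and their signs precisely, and, just as importantly, apply the simplifications $\|T\|=1$ and $\nabla_T T=0$ only \emph{after} all differentiations have been carried out, since these relations hold solely along the central geodesic $\alpha(\,\cdot\,,0,0)=\gamma$ and not for the neighboring curves of the variation.
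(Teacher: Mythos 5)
Your derivation is correct: the paper gives no proof of this statement (it is quoted directly from Cheeger--Ebin), and your computation is precisely the standard one found there --- differentiate the length integral twice, use $[T,S]=[T,W]=0$ and torsion-freeness to swap covariant derivatives, introduce curvature via $\nabla_W\nabla_T S-\nabla_T\nabla_W S=R(W,T)S$ together with the antisymmetry $\langle R(W,T)S|T\rangle=-\langle R(W,T)T|S\rangle$, and only then specialize to the central geodesic where $\|T\|=1$ and $\nabla_T T=0$ to extract the boundary term $\langle\nabla_W S|T\rangle|_0^r$ and the product term $-T\langle S|T\rangle\,T\langle W|T\rangle$. The bookkeeping of signs and of when the normalization may be applied is handled correctly, so there is nothing to add.
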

If $\alpha$ is a one-parameter variation of the geodesic $\gamma$, we have the following corollary.
\begin{corollary}(\cite{zheng})\label{c-3.1}
Let $(M,g)$ be a Riemannian manifold. Take a normal geodesic $\gamma:[0,r] \rightarrow M$, and let $\alpha:[0,r]\times (-\varepsilon,\varepsilon) \rightarrow M$ be a one-parameter variation of the geodesic $\gamma$. Then
\begin{equation*}\begin{split}
\frac{d^2L}{d^2 s}\Big|_{s=0}
=\langle \nabla_SS|T\rangle|_0^r+\int_0^r\{\langle \nabla_TS|\nabla_TS\rangle-\langle R(S,T)T|S \rangle-|T\langle S|T\rangle|^2\}dt.
\end{split}\end{equation*}
In particular, if the variation $\alpha$ is fixed, we have
\begin{equation*}\begin{split}
\frac{d^2L}{d^2 s}\Big|_{s=0}
=\int_0^r\{\langle \nabla_TS|\nabla_TS\rangle-\langle R(S,T)T|S \rangle-|T\langle S|T\rangle|^2\}dt.
\end{split}\end{equation*}
\end{corollary}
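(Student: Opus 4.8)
The plan is to obtain the one-parameter formula as a direct specialization of the two-parameter second variation formula stated just above, via the standard device of collapsing the two parameters onto their sum. Given a one-parameter variation $\alpha:[0,r]\times(-\varepsilon,\varepsilon)\to M$ of the normal geodesic $\gamma$, I would first build a two-parameter variation out of it by reparametrization: set $\tilde\alpha(t,s,w):=\alpha(t,s+w)$ on $[0,r]\times(-\varepsilon/2,\varepsilon/2)\times(-\varepsilon/2,\varepsilon/2)$. Since $\tilde\alpha(t,0,0)=\alpha(t,0)=\gamma(t)$, this is a genuine two-parameter variation of $\gamma$, so the preceding theorem applies to it verbatim.

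The next step is to identify the left-hand side. Writing $L$ for the arc-length function of $\alpha$ and $\tilde L$ for that of $\tilde\alpha$, the curve $t\mapsto\tilde\alpha(t,s,w)$ is literally the $(s+w)$-curve of $\alpha$, so $\tilde L(s,w)=L(s+w)$. Differentiating once in each parameter gives $\frac{\partial^2\tilde L}{\partial w\,\partial s}=L''(s+w)$, and evaluation at $(0,0)$ yields $\frac{\partial^2\tilde L}{\partial w\,\partial s}\big|_{(0,0)}=\frac{d^2L}{ds^2}\big|_{s=0}$, exactly the quantity we wish to compute.

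It then remains to match the variation fields. Because $\tilde\alpha$ depends on $(s,w)$ only through $\sigma:=s+w$, both pushforwards $\tilde S:=\tilde\alpha_\ast(\frac{\partial}{\partial s})$ and $\tilde W:=\tilde\alpha_\ast(\frac{\partial}{\partial w})$ coincide with the $\sigma$-velocity $\alpha_\ast(\frac{\partial}{\partial\sigma})$, so $\tilde S=\tilde W$ as vector fields along $\tilde\alpha$, and at $s=w=0$ both reduce to the variation field $S$ of $\alpha$. Substituting $\tilde W=\tilde S$ into the two-parameter formula, the boundary term becomes $\langle\nabla_S S|T\rangle|_0^r$, the quadratic term becomes $\langle\nabla_T S|\nabla_T S\rangle$, the curvature term becomes $\langle R(S,T)T|S\rangle$, and the product $T\langle S|T\rangle\,T\langle W|T\rangle$ becomes $(T\langle S|T\rangle)^2=|T\langle S|T\rangle|^2$. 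This produces the first displayed identity. There is no ordering ambiguity in $\nabla_{\tilde W}\tilde S$ versus $\nabla_{\tilde S}\tilde W$ here, since the Levi-Civita connection is torsion-free and $\tilde S=\tilde W$.

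For the fixed-endpoint case I would argue that the boundary term drops out. When $\alpha$ fixes the endpoints, the curves $s\mapsto\alpha(0,s)\equiv\gamma(0)$ and $s\mapsto\alpha(r,s)\equiv\gamma(r)$ are constant, so the variation field $S=\alpha_\ast(\frac{\partial}{\partial s})$ vanishes \emph{identically} along each of these endpoint curves. The key point—and the step I would treat most carefully, since pointwise vanishing of $S$ at a single instant would not suffice—is that a vector field which is identically zero along a curve has vanishing covariant derivative along that curve; hence $\nabla_S S=0$ at both $t=0$ and $t=r$, and $\langle\nabla_S S|T\rangle|_0^r=0$. This leaves precisely the second displayed identity, completing the argument.
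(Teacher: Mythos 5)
Your argument is correct and is essentially the route the paper intends: the paper states the corollary as an immediate specialization of the two-parameter second variation formula to the case $W=S$, and your reparametrization $\tilde\alpha(t,s,w)=\alpha(t,s+w)$ is just the standard rigorous justification of that substitution, including the identification $\partial^2\tilde L/\partial w\,\partial s|_{(0,0)}=d^2L/ds^2|_{s=0}$ and the vanishing of the boundary term for a fixed variation. No gaps; if anything, your endpoint argument is more careful than necessary, since $\nabla_S S$ already vanishes at the endpoints by tensoriality of $\nabla_X Y$ in $X$ once $S=0$ there.
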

\begin{remark}
 It follows from Chapter one in \cite{cheeger} that a vector field $S$ along $\gamma$ is a Jacobi field. By Proposition 1.14 in \cite{cheeger},  $\langle S|T\rangle_{\gamma}$ is a constant. Therefore,
\begin{equation*}\begin{split}
\frac{d^2L}{d^2 s}\Big|_{s=0}
=\int_0^r\{\langle \nabla_TS|\nabla_TS\rangle-\langle R(S,T)T|S \rangle\}dt.
\end{split}\end{equation*}
\end{remark}

A vector field $J$ along $\gamma$ is called a Jacobi field if it satisfies the following equation:
$$\nabla_T\nabla_T J- R(T,J)T=0,$$
where $T=\dot{\gamma}$.

The set of all Jacobi fields along $\gamma$ will be denoted by $\mathcal{J}(\gamma)$. A proper Jacobi field is $J \in \mathcal{J}(\gamma)$ such that
$$\langle J|T\rangle \equiv0.$$
Denote by $\mathcal{J}_0(\gamma)$ the set of all proper Jacobi fields along $\gamma$.

Let $\gamma:[0,r] \rightarrow M$ be a normal geodesic in a Riemannian manifold $(M,g)$. We shall denote by $\mathcal{X}[0,r]$ the space of all piecewise smooth vector fields $\xi$ along $\gamma$ such that
$$\langle \xi |T \rangle \equiv 0,$$
where $T=\dot{\gamma}$.
\begin{definition}\label{d-3.1}
The Morse index form $I=I_0^r: \mathcal{X}[0,r]\times \mathcal{X}[0,r] \rightarrow \mathbb{R}$ of the normal geodesic $\gamma:[0,r] \rightarrow M$ is the symmetric bilinear form
$$I(\xi,\eta)=\int_0^r[\langle \nabla_T\xi|\nabla_T\eta\rangle-\langle R(T,\eta)\xi|T\rangle]dt,$$
for all $\xi, \eta \in \mathcal{X}[0,r]$, where $T=\dot{\gamma}$.
\end{definition}

Now we consider the relation between Hessian of the distance function $\rho$ and the second variation formula of the length integral on a Riemannian manifold. Firstly, we introduce the definition of a pole.
\begin{definition}(\cite{greene})\label{d-3.2}
A point $p$ in a Riemannian manifold $(M,g)$ is called a pole if the exponential map $\exp_p:T_pM \rightarrow M$ is a diffeomorphism.
\end{definition}

Given a Riemannian manifold $M$ with a  pole $p$, the radial vector field is the unit vector field $\partial$ defined on $M-\{p\}$, such that for any $x\in M-\{p\}$, $\partial(x)$ is the unit vector tangent to the unique geodesic joining $p$ to $x$ and pointing away from $p$. A plane $\pi$ in $T_xM$ is called a radial plane if $\pi$ contains $\partial(x)$. By the radial sectional curvature of a Riemannian manifold $(M,g)$ we mean the restriction of the sectional curvature function to all the radial planes, we refer to \cite{greene} for more details.
Note that if $M$ possess a pole $p$, then it is complete \cite{greene}. In this case, we denote the distance function from $p$ to $x$ by $\rho(x)$. It is obvious that $\rho(x)$ is smooth on $M\setminus\{p\}$.

The following result (part of which) was actually appeared in Greene and Wu \cite{greene}. We single it out and give a brief proof here since we need it to prove   Theorem \ref{t-3.2}.
\begin{proposition}\label{p-3.1}
Let $(M,g)$ be a Riemannian manifold with a pole $p$. Let $\gamma:[0,r]\rightarrow M$ be a normal geodesic with $\gamma(0)=p$ and $\dot{\gamma}=T$. Then
\begin{equation*}
H(\rho)(X,X)=\int_0^r[\langle \nabla_TJ|\nabla_TJ\rangle-\langle R(T,J)J|T\rangle]dt=I(J,J),
\end{equation*}
where $J$ is a Jocabi field along $\gamma$ such that $J(0)=0$, $J(r)=X$.
\end{proposition}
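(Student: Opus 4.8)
The plan is to realize the Hessian $H(\rho)(X,X)$ as the second variation of arc length of a suitable geodesic variation, and then to identify that second variation with the index form $I(J,J)$ via Corollary \ref{c-3.1}. Since $p$ is a pole, the exponential map $\exp_p:T_pM\to M$ is a diffeomorphism (Definition \ref{d-3.2}), so $\rho$ is smooth and its gradient is the radial unit field, $\nabla\rho=T$; differentiating $\langle\nabla\rho|\nabla\rho\rangle\equiv1$ gives $H(\rho)(\,\cdot\,,T)\equiv0$, so only the component of $X$ orthogonal to $T$ matters and we may assume $\langle X|T(r)\rangle=0$. This is exactly the condition that the Jacobi field $J$ with $J(0)=0$, $J(r)=X$ be \emph{proper}, i.e.\ $\langle J|T\rangle\equiv0$, which is needed for $I(J,J)$ to be defined by Definition \ref{d-3.1}.

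First I would construct the variation. Write $\gamma(t)=\exp_p(tv)$ with $v=\dot\gamma(0)$, and choose $w\in T_pM$ with $(d\exp_p)_{rv}(rw)=X$; this is possible and unique because $(d\exp_p)_{rv}$ is a linear isomorphism. Setting $\alpha(t,s)=\exp_p\big(t(v+sw)\big)$ gives a variation of $\gamma$ through geodesics issuing from $p$ (so $\alpha(0,s)=p$ for all $s$), whose variation field $S=\alpha_\ast(\partial/\partial s)$ restricts at $s=0$ to the Jacobi field with $J(0)=0$ and $\nabla_TJ(0)=w$, hence $J(r)=X$.

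The key step is the identity $L(s)=\rho(c(s))$, where $L(s)$ is the length of $t\mapsto\alpha(t,s)$ and $c(s)=\alpha(r,s)$ is the endpoint curve with $c(0)=x$, $c'(0)=X$. Because $p$ is a pole, the geodesic from $p$ to $c(s)$ is the unique minimizing one, so its length equals the distance $\rho(c(s))$. Differentiating $\rho(c(s))$ twice and using $\nabla\rho=T$ yields
$$\frac{d^2}{ds^2}\Big|_{s=0}\rho(c(s))=H(\rho)(X,X)+\langle T|\nabla_{c'}c'\rangle|_{t=r}.$$
On the other hand, Corollary \ref{c-3.1} applied to $\alpha$ gives the same second derivative as $\langle\nabla_SS|T\rangle|_0^r$ plus the integral $\int_0^r\{\langle\nabla_TJ|\nabla_TJ\rangle-\langle R(J,T)T|J\rangle-|T\langle J|T\rangle|^2\}dt$. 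The boundary term vanishes at $t=0$ (since $S\equiv0$ there, as $\alpha(0,s)=p$) and equals $\langle\nabla_{c'}c'|T\rangle|_{t=r}$ at $t=r$, because $S|_{t=r}=c'$. Thus the two boundary contributions cancel when the two expressions for the second derivative are equated, leaving $H(\rho)(X,X)=\int_0^r\{\langle\nabla_TJ|\nabla_TJ\rangle-\langle R(J,T)T|J\rangle-|T\langle J|T\rangle|^2\}dt$. Finally, $\langle J|T\rangle\equiv0$ kills the last term and the symmetry $\langle R(J,T)T|J\rangle=\langle R(T,J)J|T\rangle$ of the curvature tensor turns the integrand into that of Definition \ref{d-3.1}, giving $H(\rho)(X,X)=I(J,J)$.

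The main obstacle I anticipate is the careful bookkeeping of the two boundary terms and verifying that they cancel: one must check that the acceleration $\nabla_{c'}c'$ of the endpoint curve arising from the chain rule for $\rho\circ c$ is precisely the $t=r$ evaluation of $\nabla_SS$ in Corollary \ref{c-3.1}, and that the $t=0$ term drops because the variation fixes the pole. The other delicate point, both crucial and reliant on the pole hypothesis, is the equality $L(s)=\rho(c(s))$: it is exactly the global minimality of geodesics from a pole that lets one replace the length of the varied geodesics by the distance function, which is what converts the second variation of length into the Hessian of $\rho$.
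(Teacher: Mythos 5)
Your proposal is correct and follows essentially the same route as the paper's proof: reduce to $X\perp T$ via $H(\rho)(\cdot,T)\equiv 0$, use the pole hypothesis to identify the length $L(s)$ of the varied radial geodesics with $\rho$ at the moving endpoint, and equate the second variation formula of Corollary \ref{c-3.1} with the chain-rule expansion of $(\rho\circ c)''$. The only (cosmetic) difference is that the paper takes the endpoint curve to be a geodesic $\zeta$ through $x$, so that the acceleration terms $\nabla_{\dot\zeta}\dot\zeta$ vanish outright, whereas you take the variation $\exp_p(t(v+sw))$ and instead observe that the nonzero boundary term $\langle\nabla_{c'}c'|T\rangle|_{t=r}$ cancels against the corresponding term from the chain rule.
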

\begin{proof}
Let $T(x)$ be the unit vector tangent to the unique geodesic joint $p$ to $x$ and pointing away from $p$. Consider the orthogonal decomposition
$$T_xM=\text{span} \{T(x)\}\oplus T^{\perp}(x),$$
where $T^{\perp}(x)=\{X \in T_xM|\langle T(x),X\rangle=0\}$.
We assert that there are also orthogonal decompositions relative to $H(\rho)$, in the sense that
$$H(\rho)(T(x),T^{\perp}(x))=0.$$
To show this, let $X \in T^{\perp}(x)$, then
$$H(\rho)(T (x),X)=H(\rho)(X,T(x))=XT(\rho)(x)-(\nabla_XT) \rho(x).$$
Since $(\nabla\rho)(x) =T(x)$ and $T(\rho)=\langle \nabla \rho |\nabla \rho \rangle =1$, this implies
$$H(\rho)(T(x),X)=-(\nabla_XT)\rho(x)=-\langle \nabla \rho| \nabla_X T \rangle_x=-\frac{1}{2}X\langle T|T\rangle_x=0.$$ By the definition of Hessian, it follow that
$$H(\rho)(T(x),T(x))=0.$$

Let $X \in T^{\perp}(x)$ and $\zeta:(-\varepsilon,\varepsilon)\rightarrow M$ be a normal geodesic such that $\dot{\zeta}(0)=X$. Let $\gamma_s:[0,r] \rightarrow M$ be a variation of $\gamma$ such that $\gamma_s=$ (the unique geodesic joining $p$ to $\zeta(s)$) and $\gamma_0=\gamma$. Note that

(i) the transversal vector field $J=\frac{d}{ds}(\gamma_s(t))|_{s=0}$ of $\gamma_s$ along $\gamma$ is a Jacobi field;

(ii) $J(0)=0$ and $J(r)=X$;

(iii) $\langle J|\dot{\gamma}\rangle =0.$

We denote $T:=\dot{\gamma}=\nabla \rho.$
Therefore, from Definition \ref{d-3.1} and Corollary \ref{c-3.1}, we have
\begin{equation*}\begin{split}
H(\rho)(X,X)&=X\dot{\zeta}(\rho)|_{s=0}-\nabla_X\dot{\zeta}(\rho)(x)\\
&=\dot{\zeta}\dot{\zeta}(\rho)|_{s=0}-\langle \nabla_{\dot{\zeta}}\dot{\zeta}|\nabla \rho \rangle_x\\
&=\frac{d^2 L}{d^2 s}\Big|_{s=0}- \langle \nabla_{\dot{\zeta}}\dot{\zeta}|\nabla \rho \rangle_x\\
&=I(J,J).
\end{split}\end{equation*}
\end{proof}
\begin{proposition}(\cite{cheeger})\label{p-3.2}
Let $\gamma$ be a geodesic in a Riemannian manifold $(M,g)$ from $p$ to $q$ such that there are no points conjugate to $p$ on $\gamma$. Let $W$ be a piecewise smooth vector field on $\gamma$ and $V$ the unique Jacobi field such that $V(p)=W(p)=0$ and $V(q)=W(q)$. Then $I(V,V) \leq I(W,W)$, and equality holds only if $V=W$.
\end{proposition}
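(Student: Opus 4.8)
The statement is the classical index lemma, so the plan is to reduce the comparison $I(V,V)\le I(W,W)$ to the nonnegativity of an explicit integral by expanding both vector fields in a frame built out of Jacobi fields. First I would record two preliminary reductions. Since $V(p)=0$ and $V(q)=W(q)$ with $W\in\mathcal{X}[0,r]$ (so $\langle W|T\rangle\equiv0$), the function $t\mapsto\langle V|T\rangle$ is affine along the geodesic (because $\tfrac{d^2}{dt^2}\langle V|T\rangle=\langle R(T,V)T|T\rangle=0$ by the Jacobi equation and curvature antisymmetry, together with $\nabla_TT=0$) and vanishes at both endpoints; hence $\langle V|T\rangle\equiv0$ and $V$ is a proper Jacobi field, so both $V$ and $W$ lie in the space $\mathcal{X}[0,r]$ on which $I$ is defined. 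Next, using the hypothesis that there are no points conjugate to $p$ on $\gamma$, I would choose Jacobi fields $J_1,\dots,J_{n-1}$ with $J_\alpha(p)=0$ and $\{\nabla_TJ_\alpha(p)\}$ a basis of the normal space $T_p^\perp$; the no-conjugate-point condition guarantees that $J_1(t),\dots,J_{n-1}(t)$ remain linearly independent for every $t\in(0,r]$, so they form a frame of the normal bundle along $\gamma|_{(0,r]}$.

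In this frame I would write $W=f^\alpha J_\alpha$ with piecewise smooth coefficients $f^\alpha$, and $V=c^\alpha J_\alpha$ with constants $c^\alpha$ (a Jacobi field vanishing at $p$ being a constant-coefficient combination of the $J_\alpha$). The boundary condition $V(q)=W(q)$ together with the linear independence of the $J_\alpha(q)$ forces $f^\alpha(r)=c^\alpha$. The analytic heart of the argument is the Wronskian identity $\langle\nabla_TJ_\alpha|J_\beta\rangle=\langle J_\alpha|\nabla_TJ_\beta\rangle$: its $t$-derivative vanishes by the Jacobi equation $\nabla_T\nabla_TJ_\alpha=R(T,J_\alpha)T$ and the symmetry $\langle R(T,J_\alpha)T|J_\beta\rangle=\langle R(T,J_\beta)T|J_\alpha\rangle$ of the curvature tensor, while at $p$ it vanishes because $J_\alpha(p)=0$.

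Writing $\nabla_TW=A+B$ with $A=(f^\alpha)'J_\alpha$ and $B=f^\alpha\nabla_TJ_\alpha$, I would then verify the pointwise identity
\begin{equation*}
\langle\nabla_TW|\nabla_TW\rangle-\langle R(T,W)W|T\rangle=\langle A|A\rangle+\frac{d}{dt}\langle B|W\rangle,
\end{equation*}
which follows by expanding $\langle\nabla_TW|\nabla_TW\rangle$, rewriting $\langle R(T,W)W|T\rangle=-f^\alpha f^\beta\langle\nabla_T\nabla_TJ_\alpha|J_\beta\rangle$ through the curvature symmetries and the Jacobi equation, and using the Wronskian identity to cancel the cross terms. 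Integrating over $[0,r]$ gives
\begin{equation*}
I(W,W)=\int_0^r\langle A|A\rangle\,dt+\langle B|W\rangle\big|_0^r.
\end{equation*}
Because $f^\alpha(r)=c^\alpha$ we have $B(r)=\nabla_TV(r)$ and $W(r)=V(r)$, so the boundary term at $r$ equals $\langle\nabla_TV|V\rangle(r)$, which is exactly the boundary term produced by the same identity applied to $V$ (for which $A\equiv0$); hence $\langle B|W\rangle\big|_0^r=I(V,V)$, and we obtain $I(W,W)=I(V,V)+\int_0^r\langle A|A\rangle\,dt\ge I(V,V)$. Equality forces $A\equiv0$, i.e. each $f^\alpha$ is constant, i.e. $W=V$.

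The step I expect to be delicate is the behavior at the pole endpoint $t=0$, where the frame $\{J_\alpha\}$ degenerates. One must check that the coefficients $f^\alpha$ obtained by inverting the frame on $(0,r]$ extend to bounded piecewise smooth functions on $[0,r]$, that the integral $\int_0^r\langle A|A\rangle\,dt$ converges, and that the boundary contribution $\langle B|W\rangle$ vanishes at $t=0$. This is handled by the local expansion $J_\alpha(t)=t\big(\nabla_TJ_\alpha(p)+O(t)\big)$ near $p$: the frame matrix behaves like $t$ times an invertible matrix, so $W(0)=0$ guarantees the $f^\alpha$ stay bounded and the $t=0$ boundary term drops out.
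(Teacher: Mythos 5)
Your proof is correct: it is the standard proof of the Index Lemma via expansion of $W$ in a Jacobi-field frame, the Wronskian (Lagrange identity) symmetry, and the resulting decomposition $I(W,W)=I(V,V)+\int_0^r\langle A|A\rangle\,dt$, with the degeneracy of the frame at $t=0$ properly flagged and handled. The paper gives no proof of its own here --- it quotes the result from Cheeger--Ebin --- and your argument is essentially the one in that reference, so there is nothing to compare beyond noting the match.
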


As an application of Proposition \ref{p-3.1} and \ref{p-3.2}, we have the following theorem.
\begin{theorem}\label{t-3.2}
Suppose that $(M,g)$ is a Riemannian manifold with a pole $p$ such that its radial sectional curvature is bounded from below by a negative constant $-K^2$. Suppose that $\gamma:[0,r] \rightarrow M$ is a normal geodesic such that $\gamma(0)=p$ and $\gamma(r)=x$. Then
\begin{equation*}
H(\rho)(u,u)(x) \leq \frac{1}{\rho}+K,
\end{equation*}
where $u= u^i\frac{\partial}{\partial x^i} \in T_xM$ is a unit vector.
\end{theorem}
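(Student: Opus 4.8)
The plan is to reduce the estimate to the orthogonal complement of the radial direction and then run a one-parameter index-form comparison against the space form of constant curvature $-K^2$. Write $T=T(x)$ for the radial unit vector and recall from the proof of Proposition \ref{p-3.1} the $H(\rho)$-orthogonal decomposition $H(\rho)(T,T)=0$ and $H(\rho)(T,Y)=0$ for every $Y$ with $\langle Y|T\rangle=0$. Decomposing the unit vector as $u=(\cos\theta)T+(\sin\theta)\hat{X}$ with $\hat{X}\perp T$ a unit vector, bilinearity of $H(\rho)$ collapses everything to $H(\rho)(u,u)=\sin^2\theta\,H(\rho)(\hat{X},\hat{X})$. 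Since the target bound $\frac{1}{\rho}+K$ is nonnegative and $0\le\sin^2\theta\le1$, it suffices to prove $H(\rho)(\hat{X},\hat{X})\le\frac{1}{\rho}+K$: if $H(\rho)(\hat{X},\hat{X})\ge0$ then $\sin^2\theta\,H(\rho)(\hat{X},\hat{X})\le H(\rho)(\hat{X},\hat{X})\le\frac{1}{\rho}+K$, while if it is negative then $\sin^2\theta\,H(\rho)(\hat{X},\hat{X})\le0\le\frac{1}{\rho}+K$; either way the claim follows without needing the sign of $H(\rho)(\hat X,\hat X)$.

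Next I would invoke Proposition \ref{p-3.1} to identify $H(\rho)(\hat{X},\hat{X})$ with the Morse index $I(J,J)$ of the Jacobi field $J$ along $\gamma$ with $J(0)=0$ and $J(r)=\hat{X}$. Because $p$ is a pole, $\exp_p$ is a diffeomorphism and $\gamma$ carries no point conjugate to $p$, so Proposition \ref{p-3.2} applies and gives $I(J,J)\le I(W,W)$ for any piecewise smooth $W$ along $\gamma$ with $W(0)=0$ and $W(r)=\hat{X}$. I would choose the comparison field $W(t)=\frac{\sinh(Kt)}{\sinh(Kr)}E(t)$, where $E$ is the parallel unit field along $\gamma$ with $E\perp T$ and $E(r)=\hat{X}$; this is precisely (a normalization of) the radial Jacobi field of the model space of curvature $-K^2$.

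It then remains to compute and estimate $I(W,W)$. Writing $g(t)=\sinh(Kt)/\sinh(Kr)$, parallelism of $E$ gives $\nabla_T W=g'(t)E$, so $\langle\nabla_T W|\nabla_T W\rangle=g'(t)^2$, while $\langle R(T,W)W|T\rangle=g(t)^2\,K(T,E)$, where $K(T,E)$ denotes the sectional curvature of the plane spanned by $T$ and $E$. Since this plane contains the radial direction $T$, it is a radial plane and the hypothesis yields $K(T,E)\ge-K^2$, whence $I(W,W)\le\int_0^r[g'(t)^2+K^2g(t)^2]\,dt$. A direct computation reduces the integrand to $\frac{K^2}{\sinh^2(Kr)}\cosh(2Kt)$ and gives $I(W,W)\le K\coth(Kr)$. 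Finally, using $\rho(x)=r$ (the geodesic $\gamma$ is the unique minimizing geodesic from $p$ to $x$ since $p$ is a pole) together with the elementary inequality $\coth(s)<\frac{1}{s}+1$ for $s>0$, which holds because $s\mapsto\frac{1}{s}+1-\coth s$ is strictly decreasing (as $\sinh s>s$ forces $\frac{1}{\sinh^2 s}<\frac{1}{s^2}$) with limit $0$ at infinity, I obtain $K\coth(Kr)\le\frac{1}{r}+K=\frac{1}{\rho}+K$, completing the estimate.

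The computation is otherwise routine; the one genuinely delicate point is the choice of comparison field. For instance, the linear interpolation $\frac{t}{r}E(t)$ produces the bound $\frac{1}{r}+\frac{K^2r}{3}$, which diverges relative to $\frac{1}{\rho}+K$ as $r\to\infty$, so the model-space Jacobi field $\sinh(Kt)/\sinh(Kr)$ is essential to extract the sharp value $K\coth(Kr)$ before applying $\coth s<\frac{1}{s}+1$. Thus the main obstacle is recognizing the correct comparison field, equivalently matching the argument to the Hessian comparison theorem in the space form of curvature $-K^2$, and verifying the scalar inequality that converts $K\coth(Kr)$ into the stated $\frac{1}{\rho}+K$.
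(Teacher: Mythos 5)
Your proposal is correct, and it follows the paper's overall strategy (identify $H(\rho)(u,u)$ with the index form $I(J,J)$ via Proposition \ref{p-3.1}, then apply the index lemma, Proposition \ref{p-3.2}, to a comparison field), but it differs in two substantive ways. First, you insert a reduction the paper skips: decomposing $u=(\cos\theta)T+(\sin\theta)\hat{X}$ and using $H(\rho)(T,\cdot)=0$ to pass to a vector orthogonal to the radial direction before invoking Proposition \ref{p-3.1}. This is actually a point in your favor, since the Jacobi field constructed in the proof of Proposition \ref{p-3.1} and the domain $\mathcal{X}[0,r]$ of the index form both require orthogonality to $T$, whereas the paper applies Proposition \ref{p-3.1} to an arbitrary unit vector without comment. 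Second, your comparison field is the model-space Jacobi field $W(t)=\frac{\sinh(Kt)}{\sinh(Kr)}E(t)$, yielding the sharp intermediate bound $I(W,W)\le K\coth(Kr)$ followed by the scalar inequality $\coth s\le \frac{1}{s}+1$; the paper instead uses the family $\xi(t)=(t/r)^{\alpha}\eta(t)$, computes $I(\xi,\xi)\le \frac{1}{r}+\frac{(\alpha-1)^2}{(2\alpha-1)r}+\frac{K^2r}{2\alpha+1}$, and optimizes over $\alpha>1$ by an arithmetic--geometric mean argument to land on the same $\frac{1}{r}+K$. Your route buys the sharper constant $K\coth(K\rho)$ (the exact Hessian of the hyperbolic model) at the cost of one elementary hyperbolic inequality; the paper's route avoids hyperbolic functions entirely but requires the optimization over the exponent $\alpha$, and your observation that the naive linear field $\frac{t}{r}E(t)$ fails is exactly the issue both devices are designed to circumvent. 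Both arguments are complete and correct.
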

\begin{proof}
Firstly, let $u \in T_xM$ be a unit vector. By Proposition \ref{p-3.1}, we have
\begin{equation}\label{A_1}
H(\rho)(u,u)=\int_0^r[\langle \nabla_TJ|\nabla_TJ\rangle-\langle R(T,J)J|T\rangle]dt =I(J,J),
\end{equation}
where $J$ is a Jacobi field along the geodesic $\gamma$ such that $J(0)=0$ and $J(r)=u$.

 Let $\eta(t)$ be a unit vector field along $\gamma$ such that $\eta(r)=u$. Set $\xi(t)=(\frac{t}{r})^\alpha\eta(t)$ and $\alpha>1$, then it is clear that $\xi(0)=J(0)=0$ and $\xi(r)=J(r)=u$. By Proposition \ref{p-3.2} and Definition \ref{d-3.1}, we have
\begin{equation}\begin{split}\label{eq-3.1}
I(J,J) &\leq \int_0^r[\langle\nabla_{T}\xi|\nabla_{T}\xi\rangle-\langle R(T,\xi)\xi|T\rangle]dt\\
& \leq \int_0^r\Big[\Big\langle{\alpha}\Big(\frac{t}{r}\Big)^{\alpha-1}\xi\Big|{\alpha}\Big(\frac{t}{r}\Big)^{\alpha-1}\xi\Big\rangle+K^2 \langle T|T\rangle\langle \xi|\xi\rangle\Big]dt\\
&= \int_0^r\Big[\alpha^2\Big(\frac{t}{r}\Big)^{2(\alpha-1)}+K^2\Big(\frac{t}{r}\Big)^{2\alpha}\Big]dt\\
&=\frac{\alpha^2}{2\alpha-1}\cdot\frac{1}{r}+\frac{K^2r}{2\alpha+1}\\
&=\frac{1}{r}+\frac{(\alpha-1)^2}{2\alpha-1}\frac{1}{r}+\frac{K^2r}{2\alpha+1},
\end{split}\end{equation}
where in the second inequality we used the condition that $-K^2$ is the lower bound of the radial sectional curvature.

We can take a suitable $\alpha$ such that
\begin{equation*}
\frac{(\alpha-1)^2}{2\alpha-1}\frac{1}{r}=\frac{K^2r}{2\alpha+1}.
\end{equation*}
Therefore, we obtain
\begin{equation}\label{z-1}
\frac{(\alpha-1)^2}{2\alpha-1}\frac{1}{r}+\frac{K^2r}{2\alpha+1} =2 \sqrt{\frac{K^2(\alpha-1)^2}{4\alpha^2-1}}= \sqrt{\frac{K^2(4\alpha^2-8\alpha+4)}{4\alpha^2-1}} \leq K,
\end{equation}
where in the last inequality we used the fact that $ \alpha>1$.

By $\eqref{eq-3.1}$, $\eqref{z-1}$ and the fact $\rho=r$, we get
\begin{equation}\label{A_2}
I(J,J) \leq \frac{1}{\rho}+K.
\end{equation}
Plugging \eqref{A_2} into \eqref{A_1} we get
$$H(\rho)(u,u)(x) \leq \frac{1}{\rho}+K.$$
\end{proof}

As a simple application, we obtain the following result.
\begin{corollary}\label{c-3.2}
 Suppose that $(M,g)$ is a Riemannian manifold with a pole $p$ such that its radial sectional curvature is bounded from below by a negative constant $-K^2$. Suppose that $\gamma:[0,r] \rightarrow M$ is a normal geodesic such that $\gamma(0)=p$ and $\gamma(r)=x$. Then Then with respect to  the normal coordinates at the point $x$, we have
\begin{equation*}
\frac{\partial^2 \rho}{\partial x^i \partial x^j}(x)\leq \Big(\frac{1}{\rho}+K\Big)\delta_{ij}.
\end{equation*}
\end{corollary}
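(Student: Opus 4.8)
The plan is to deduce this corollary directly from Theorem \ref{t-3.2} by passing to geodesic normal coordinates centered at the point $x$. The shape of the asserted inequality, with $\delta_{ij}$ on the right-hand side, signals that we should work in a frame in which the metric is Euclidean at $x$; normal coordinates at $x$ achieve this and simultaneously kill the Christoffel correction term that relates the coordinate second derivatives to the invariant Hessian. So first I would fix geodesic normal coordinates $\{x^i\}$ around $x$, for which $g_{ij}(x)=\delta_{ij}$ and $\Gamma^k_{ij}(x)=0$.

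Next I would convert the coordinate second derivatives into the invariant Hessian. From the definition $H(\rho)(X,Y)=X(Y\rho)-(\nabla_XY)\rho$, taking $X=\frac{\partial}{\partial x^i}$ and $Y=\frac{\partial}{\partial x^j}$ gives
\begin{equation*}
H(\rho)\Big(\frac{\partial}{\partial x^i},\frac{\partial}{\partial x^j}\Big)(x)=\frac{\partial^2\rho}{\partial x^i\partial x^j}(x)-\Gamma^k_{ij}(x)\frac{\partial\rho}{\partial x^k}(x).
\end{equation*}
Since $\Gamma^k_{ij}(x)=0$ in normal coordinates, the last term vanishes and the coordinate Hessian coincides with the invariant Hessian at $x$. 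Now for any $u=u^i\frac{\partial}{\partial x^i}\in T_xM$ satisfying $\sum_i(u^i)^2=1$, the identity $g_{ij}(x)=\delta_{ij}$ makes $u$ a unit vector, so Theorem \ref{t-3.2} applies and yields
\begin{equation*}
\frac{\partial^2\rho}{\partial x^i\partial x^j}(x)\,u^iu^j=H(\rho)(u,u)(x)\leq\frac{1}{\rho}+K=\Big(\frac{1}{\rho}+K\Big)\delta_{ij}u^iu^j.
\end{equation*}

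Finally, since this quadratic inequality holds for every $u$ with $\sum_i(u^i)^2=1$, and both sides are homogeneous of degree two in $u$, it holds for all $u\in T_xM$. Hence the symmetric matrix $\big(\frac{\partial^2\rho}{\partial x^i\partial x^j}(x)-(\frac{1}{\rho}+K)\delta_{ij}\big)$ is negative semidefinite, which is precisely the claimed matrix inequality $\frac{\partial^2\rho}{\partial x^i\partial x^j}(x)\leq(\frac{1}{\rho}+K)\delta_{ij}$. There is essentially no hard step here: the corollary is a pointwise reformulation of Theorem \ref{t-3.2}, and the only thing one must be careful about is the coordinate choice. In an arbitrary chart the same computation would produce $g_{ij}(x)$ in place of $\delta_{ij}$ together with an extra $\Gamma^k_{ij}(x)\frac{\partial\rho}{\partial x^k}(x)$ term, so the clean form stated in the corollary is valid exactly in normal coordinates at $x$.
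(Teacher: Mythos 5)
Your proof is correct and follows essentially the same route as the paper: choose normal coordinates at the point so that $g_{ij}=\delta_{ij}$ and $\Gamma^k_{ij}=0$ there, identify the coordinate second derivatives with the invariant Hessian, and apply Theorem \ref{t-3.2}; your closing remark that the matrix inequality should be read as negative semidefiniteness of the difference of quadratic forms makes explicit what the paper leaves implicit. (Incidentally, your sign $-\Gamma^k_{ij}\partial_k\rho$ in the Hessian formula is the correct one given the paper's definition $H(f)(X,Y)=X(Yf)-(\nabla_XY)f$; the paper's proof writes $+\Gamma^k_{ij}$, a harmless typo since the term vanishes at the chosen point.)
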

\begin{proof}
For any given point $x_0 \in M$, there exists a local coordinate system $(x^1,\cdots, x^n)$ in a neighborhood of $x_0$ such that $g_{ij}(x_0)=\delta_{ij}$ and $\Gamma_{ij}^k(x_0)=0$. By definition of Hessian, we have
\begin{equation*}\begin{split}
H(\rho)(u,u)&=H(\rho)\Big(u^i\frac{\partial}{\partial x^i},u^j\frac{\partial}{\partial x^j}\Big)\\
&=u^iu^jH(\rho)\Big(\frac{\partial}{\partial x^i},\frac{\partial}{\partial x^j}\Big)\\
&=u^iu^j\Big(\frac{\partial^2 \rho}{\partial x^i \partial x^j}+\Gamma^k_{ij}(x)\frac{\partial \rho}{\partial x^k}\Big).
\end{split}\end{equation*}
Thus at the point $x_0$, we have
$$H(\rho)(u,u)(x_0)=u^iu^j\frac{\partial^2 \rho}{\partial x^i \partial x^j}(x_0).$$
By Theorem \ref{t-3.2}, we have
$$ \frac{\partial^2 \rho}{\partial x^i \partial x^j}(x_0) \leq \Big(\frac{1}{\rho}+K\Big)\delta_{ij},$$
where $\delta_{ij}$ is the Kronecker symbol.
\end{proof}

By Theorem \ref{t-3.2} and Corollary \ref{c-3.2}, we obtain the following corollary.
\begin{corollary}\label{c-3.3}
 Suppose that $(M,g)$ is a Riemannian manifold with a pole $p$ such that its sectional curvature is bounded from below by a negative constant $-K^2$. Suppose that $\gamma:[0,r] \rightarrow M$ is a normal geodesic such that $\gamma(0)=p$ and $\gamma(r)=x$. Then with respect to  the normal coordinates at the point $x$, we have
$$\frac{\partial^2\rho^2}{\partial x^i \partial x^j}u^iu^j \leq2(2+\rho K), $$
where $u=u^i\frac{\partial}{\partial x^i} \in T_xM$ is a unit vector.
\end{corollary}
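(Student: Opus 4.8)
The plan is to reduce the statement about the Euclidean Hessian of $\rho^2$ to the already-established estimate for the Hessian of $\rho$ in Corollary \ref{c-3.2}, via the product rule. First I would fix the point $x$ and, exactly as in the proof of Corollary \ref{c-3.2}, choose local coordinates $(x^1,\dots,x^n)$ in a neighborhood of $x$ with $g_{ij}(x)=\delta_{ij}$ and $\Gamma^k_{ij}(x)=0$; all quantities below are to be evaluated at $x$ in these coordinates. Since the full sectional curvature is bounded below by $-K^2$, so is the radial sectional curvature, and hence Corollary \ref{c-3.2} applies and gives $\frac{\partial^2\rho}{\partial x^i\partial x^j}u^iu^j\le \frac{1}{\rho}+K$.

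Next I would differentiate $\rho^2$ twice and contract with the unit vector $u$. Using $\frac{\partial}{\partial x^i}(\rho^2)=2\rho\frac{\partial\rho}{\partial x^i}$ and then differentiating once more, the product rule yields
\begin{equation*}
\frac{\partial^2 \rho^2}{\partial x^i\partial x^j}u^iu^j
=2\Big(\frac{\partial\rho}{\partial x^i}u^i\Big)^{2}
+2\rho\,\frac{\partial^2\rho}{\partial x^i\partial x^j}u^iu^j,
\end{equation*}
which splits the problem into estimating a gradient term and a Hessian term. For the gradient term, note that $\frac{\partial\rho}{\partial x^i}u^i=\langle\nabla\rho\,|\,u\rangle$, and since $\rho$ is the distance from the pole $p$ with $x\neq p$, the radial vector field $\nabla\rho=T$ is a unit vector; the Cauchy--Schwarz inequality together with $|u|=1$ then gives $\big(\frac{\partial\rho}{\partial x^i}u^i\big)^{2}\le 1$. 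For the Hessian term, I would invoke the bound just recalled from Corollary \ref{c-3.2}, and since $\rho=r>0$, multiplying through by $2\rho$ preserves the inequality. Combining the two estimates produces
\begin{equation*}
\frac{\partial^2\rho^2}{\partial x^i\partial x^j}u^iu^j
\le 2+2\rho\Big(\frac{1}{\rho}+K\Big)
=2(2+\rho K),
\end{equation*}
as desired.

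There is no genuine conceptual obstacle here, as the argument is essentially bookkeeping once Corollary \ref{c-3.2} is available. The only points requiring care are ensuring that $\rho>0$ (so that multiplying the Hessian bound by $2\rho$ does not reverse the inequality) and justifying $|\nabla\rho|=1$, which rests on the pole hypothesis guaranteeing that $\rho$ is smooth with unit gradient away from $p$; both are consequences of the assumptions already in force.
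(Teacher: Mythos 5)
Your proposal is correct and follows essentially the same route as the paper: both split $H(\rho^2)(u,u)=2(d\rho(u))^2+2\rho\,H(\rho)(u,u)$ in normal coordinates at $x$, bound the gradient term by $1$ (the paper via a parallel orthonormal frame along $\gamma$, you via Cauchy--Schwarz with $|\nabla\rho|=1$ --- the same estimate), and invoke Corollary \ref{c-3.2} for the Hessian term. Your explicit remark that a lower bound on the full sectional curvature implies one on the radial sectional curvature is a point the paper leaves implicit.
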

\begin{proof}
Note that $\nabla (\rho^2)=2\rho \nabla \rho$. Thus we have
\begin{equation}\label{eq-3.5}
H(\rho^2)(u,u)=2(d\rho(u))^2+2\rho H(\rho)(u,u).
\end{equation}
If we write
$$u=\sum_{i=1}^n(u')^iE_i=\sum_{i=1}^{n-1}(u')^iE_i+(u')^nE_n,$$
we get
\begin{equation}\label{eq-3.6}
(d\rho(u))^2=((u')^n)^2 \leq g(u)=1.
\end{equation}
For any given point $x_0 \in M$, there exists a local coordinate system $(x^1,\cdots,x^n)$ in a neighborhood of $x_0$ such that $g_{ij}(x_0)=\delta_{ij}$ and $\Gamma_{ij}^k(x_0)=0$. By \eqref{eq-3.5}, \eqref{eq-3.6} and Corollary \ref{c-3.2}, at the point $x_0$ we have
\begin{equation*}\begin{split}
H(\rho^2)(u,u)&=2(d\rho(u))^2+2\rho H(\rho)(u,u)\\
&\leq 2(2+\rho K).
\end{split}\end{equation*}
\end{proof}
In differential geometry, the following Gauss lemma is of importance.
\begin{theorem}(\cite{cheeger})\label{t-3.3}
Let $(M,g)$ be a Riemannian manifold, fix $p \in M$. If $\rho(t)=tv$ is a ray through the origin of $T_pM$ and $w\in (T_pM)_{\rho(t)}$ is perpendicular to $\rho'(t)$, then $d\exp(w)$ is perpendicular to $d\exp(\rho'(t))$.
\end{theorem}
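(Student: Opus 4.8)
The plan is to realize both $d\exp_p(w)$ and $d\exp_p(\rho'(t))$ as partial derivatives of a single two-parameter family of geodesics emanating from $p$, and then to exploit the fact that the radial curves are constant-speed geodesics. I would first reduce the statement to showing that $\langle d\exp_p(w),\, d\exp_p(\rho'(t))\rangle = 0$, the inner product being that of $g$ at the point $\exp_p(tv)$.

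The first step is to choose a convenient representative of $w$. Since $w\in(T_pM)_{\rho(t)}\cong T_pM$ satisfies $\langle w,v\rangle=0$, I can pick a smooth curve $s\mapsto v(s)$ in $T_pM$ with $v(0)=v$, with $|v(s)|\equiv|v|$ (that is, lying on the sphere of radius $|v|$), and with $v'(0)=w/t$; the constancy of $|v(s)|$ is available precisely because $w\perp v$. I then define the parametrized surface
\[
f(t,s)=\exp_p\big(t\,v(s)\big),
\]
so that $\partial_s f|_{s=0}=d\exp_p(t\,v'(0))=d\exp_p(w)$ and $\partial_t f|_{s=0}=d\exp_p(v)=d\exp_p(\rho'(t))$. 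For each fixed $s$ the curve $t\mapsto f(t,s)$ is a geodesic, hence $\tfrac{D}{\partial t}\partial_t f=0$ and $|\partial_t f|^2=|v(s)|^2=|v|^2$ is independent of both $t$ and $s$.

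The heart of the argument is to show that $\langle\partial_s f,\partial_t f\rangle$ is constant along each radial geodesic. Differentiating in $t$ and using that $t\mapsto f(t,s)$ is a geodesic gives $\tfrac{\partial}{\partial t}\langle\partial_s f,\partial_t f\rangle=\langle\tfrac{D}{\partial t}\partial_s f,\partial_t f\rangle$. By the torsion-freeness of the Levi-Civita connection, $\tfrac{D}{\partial t}\partial_s f=\tfrac{D}{\partial s}\partial_t f$, so this equals $\tfrac{1}{2}\tfrac{\partial}{\partial s}|\partial_t f|^2=0$. Therefore $\langle\partial_s f,\partial_t f\rangle$ does not depend on $t$. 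Evaluating at $t=0$, where $f(0,s)=p$ for all $s$ forces $\partial_s f(0,s)=0$, shows that this constant value is $0$; hence $\langle\partial_s f,\partial_t f\rangle=0$ for all $t$, which is exactly the desired orthogonality.

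I expect the only delicate point to be the bookkeeping of the canonical identification $T_{tv}(T_pM)\cong T_pM$ together with the scaling $v'(0)=w/t$, so that $\partial_s f|_{s=0}$ really reproduces $d\exp_p(w)$ and not a multiple of it; once this is arranged, the hypothesis $w\perp\rho'(t)$ enters exactly through the constraint $|v(s)|\equiv|v|$, and everything else is routine. Alternatively, the same conclusion follows from the machinery already set up in this section: writing $d\exp_p(w)=J(t)$ for the Jacobi field $J$ along $\gamma(t)=\exp_p(tv)$ with $J(0)=0$ and $\nabla_T J(0)=w/t$, one checks that $\langle J,\dot\gamma\rangle$ is affine in $t$, with both $\langle J(0),\dot\gamma(0)\rangle$ and $\langle\nabla_T J(0),\dot\gamma(0)\rangle$ vanishing, so it is identically zero.
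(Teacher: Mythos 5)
Your proof is correct and is the standard proof of the Gauss lemma; the paper itself gives no proof of this statement, quoting it directly from Cheeger--Ebin, where essentially your argument (the two-parameter family $f(t,s)=\exp_p(tv(s))$ with $|v(s)|$ constant, symmetry of the Levi-Civita connection, and constancy of $\langle\partial_s f,\partial_t f\rangle$ along the radial geodesics) is the one given. The Jacobi-field variant you sketch at the end is also valid and fits naturally with the machinery already developed in Section~3 of the paper.
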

Setting
$$B_p(r)=\{x \in M |d(p,x) < r\}, \quad S_p(r)=\{x \in M |d(p,x) = r\},$$
where $d(p,x)$ is the distance from $p$ to $x$ induced by the Riemannian metric $g$ on $M$.

Using theorem \ref{t-3.3}, one can easily get the following corollary.
\begin{corollary}\label{c-3.4}
Let $(M,g)$ be a Riemannian manifold, fix $p \in M$ and $x \in S_p(r)$. Then $u \in T_xM$ belongs to $T_x(S_p(r))$ if and only if
$$\langle u|T \rangle=0,$$
where $T$ is the unit vector tangent to the unique geodesic joint $p$ to $x$ and pointing away from $p$.
\end{corollary}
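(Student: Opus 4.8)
The plan is to realize the geodesic sphere $S_p(r)$ as the image under the exponential map $\exp_p$ of the Euclidean sphere of radius $r$ in $T_pM$, and then to transport the orthogonality relation of that Euclidean sphere to $M$ by means of the Gauss lemma (Theorem \ref{t-3.3}). Throughout I would use that near $x$ the map $\exp_p$ is a local diffeomorphism, which is automatic when $p$ is a pole as in the surrounding setup (or more generally when $x$ lies within the injectivity radius); this guarantees that $S_p(r)$ is a smooth hypersurface near $x$ and that $d(\exp_p)_{rv_0}$ is a linear isomorphism. I would state this hypothesis explicitly at the outset.

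First I would fix the unit vector $v_0 \in T_pM$ for which $\gamma(t)=\exp_p(tv_0)$ is the unique normal geodesic from $p$ to $x$, so that $x=\exp_p(rv_0)$ and $T=\dot{\gamma}(r)=d(\exp_p)_{rv_0}(v_0)$, with $|T|=1$ since $\gamma$ is normal. Because $S_p(r)=\exp_p(\{v \in T_pM : |v|=r\})$ near $x$, differentiating shows that $T_x(S_p(r))$ is exactly the image under $d(\exp_p)_{rv_0}$ of the tangent space to the Euclidean sphere $\{|v|=r\}$ at the point $rv_0$, namely the image of the orthogonal complement $\{w \in T_pM : w \perp v_0\}$.

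For the forward direction I would take any $u \in T_x(S_p(r))$ and write $u=d(\exp_p)_{rv_0}(w)$ with $w \perp v_0$. Applying the Gauss lemma to the ray $t \mapsto tv_0$ and to $w$ gives that $d(\exp_p)_{rv_0}(w)$ is perpendicular to $d(\exp_p)_{rv_0}(v_0)=T$, hence $\langle u|T\rangle=0$. For the converse I would take $u \in T_xM$ with $\langle u|T\rangle=0$ and, using that $d(\exp_p)_{rv_0}$ is an isomorphism, write $u=d(\exp_p)_{rv_0}(\tilde{u})$; decomposing $\tilde{u}=av_0+w$ with $w \perp v_0$, the forward computation shows the $w$-term is orthogonal to $T$, so $0=\langle u|T\rangle=a\langle T|T\rangle=a$. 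Thus $a=0$, $\tilde{u}\perp v_0$, and $u$ lies in $T_x(S_p(r))$.

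I do not expect a serious obstacle beyond bookkeeping, since the Gauss lemma performs all of the geometric work; the two directions amount to pushing the Euclidean orthogonal decomposition $T_{rv_0}(T_pM)=\mathbb{R}v_0 \oplus v_0^{\perp}$ forward through the isomorphism $d(\exp_p)_{rv_0}$. The only point that genuinely requires care is ensuring that $d(\exp_p)_{rv_0}$ is invertible and that $S_p(r)$ is a smooth submanifold near $x$, which is precisely where the pole assumption (equivalently, staying inside the injectivity radius) of the ambient setting enters. As an aside, the same statement follows from the identity $T=\nabla\rho$ with $|\nabla\rho|=1$ established in the proof of Proposition \ref{p-3.1}, since then $S_p(r)=\rho^{-1}(r)$ is a level set and $T_x(S_p(r))=\ker(d\rho_x)=T^{\perp}$; I would mention this only as a consistency check.
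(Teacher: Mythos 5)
Your proposal is correct and follows exactly the route the paper intends: the paper offers no written proof beyond the remark that the corollary follows from the Gauss lemma (Theorem \ref{t-3.3}), and your argument---identifying $T_x(S_p(r))$ with $d(\exp_p)_{rv_0}(v_0^{\perp})$ and pushing the Euclidean orthogonal decomposition forward---is precisely that intended deduction, carried out with appropriate care about the invertibility of $d(\exp_p)_{rv_0}$.
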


Let $(M,g)$ be a Riemannian manifold with a pole $p$. We know that $\rho(x)$ is a smooth function on $M\setminus\{p\}$. By the classical Hopf-Rinow theorem, there exist a minimizing geodesic $\sigma$ connecting $p$ to $x$, such that
$$\rho(x)=d(p,x)=L(\sigma).$$
Since the gradient of $\rho^2(x)$ is equal to $2\rho\nabla\rho$. By Corollary \ref{c-3.4} and the fact that $\nabla \rho =T$, we easily have
$$\langle \nabla \rho^2|T\rangle= \langle 2\rho \nabla \rho |T\rangle= 2\rho,$$
where in the last equality we used the fact that $g(T(x))=\langle T|T\rangle=1$.

Thus we have proved the following theorem.
\begin{theorem}\label{t-3.4}
Suppose that $(M,g)$ is a Riemannian manifold with a pole $p$. Let $\gamma:[0,r] \rightarrow M$ be a normal geodesic. Then
$$\langle \nabla \rho^2|T\rangle=2\rho.$$
\end{theorem}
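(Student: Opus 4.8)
The plan is to reduce the stated identity to two elementary facts: that the gradient of the distance function coincides with the radial unit field $T$, and that $T$ has unit length. First I would record that, because $p$ is a pole, $\exp_p$ is a global diffeomorphism, so $\rho$ is smooth away from $p$ and $\nabla\rho$ is well-defined along the geodesic $\gamma$; by Hopf--Rinow the minimizing geodesic realizing $\rho(x)=d(p,x)$ exists and is unique, which makes the radial field $T$ unambiguous.

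The central step is to establish $\nabla\rho=T$, equivalently $\langle\nabla\rho\,|\,T\rangle=1$. By the Gauss lemma (Theorem \ref{t-3.3}) and its Corollary \ref{c-3.4}, a vector $u\in T_xM$ is tangent to the geodesic sphere $S_p(\rho(x))$ precisely when $\langle u\,|\,T\rangle=0$; hence $T$ is the unit normal to the level sets of $\rho$. Since $\rho$ increases at unit rate along the unit-speed radial geodesic, $d\rho(T)=T(\rho)=1$, and because $\nabla\rho$ is orthogonal to the level sets of $\rho$ and has the same length as the rate of change, it must coincide with $T$. In particular $\langle\nabla\rho\,|\,T\rangle=\langle T\,|\,T\rangle=1$.

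Granting this, the computation is immediate. The product (chain) rule for the gradient gives $\nabla(\rho^2)=2\rho\,\nabla\rho$, whence
\[
\langle\nabla\rho^2\,|\,T\rangle=2\rho\,\langle\nabla\rho\,|\,T\rangle=2\rho\,\langle T\,|\,T\rangle=2\rho,
\]
where the final equality uses $\|T\|=1$.

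The only point requiring care, and hence the main obstacle, is the justification of $\nabla\rho=T$ (equivalently $\langle\nabla\rho\,|\,T\rangle=1$): one must invoke the Gauss lemma to know that $T$ is orthogonal to the distance spheres, then combine this with the unit-speed growth of $\rho$ along $\gamma$ to pin down both the direction and the length of $\nabla\rho$. Once this is in place, the remainder is a single line, and the pole hypothesis guarantees there are no difficulties with smoothness or non-uniqueness of the radial geodesic.
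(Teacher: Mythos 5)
Your proposal is correct and follows essentially the same route as the paper: both invoke the Gauss lemma (via Corollary \ref{c-3.4}) to identify $\nabla\rho$ with the radial unit field $T$, then apply $\nabla(\rho^2)=2\rho\,\nabla\rho$ and $\langle T|T\rangle=1$. You merely spell out the justification of $\nabla\rho=T$ in more detail than the paper, which simply asserts it.
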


\section{Holomorphic mappings between complex Finsler manifolds}

In this section, we assume that $M$ and $N$ are two complex manifolds of complex dimension $n$ and $m$, respectively. Suppose that $G:T^{1,0}M\rightarrow [0, +\infty)$ and $H:T^{1,0}N\rightarrow [0,+\infty)$ are two strongly pseudoconvex complex Finsler metrics on $M$ and $N$, respectively. Now we consider a non-constant holomorphic mapping $f:M\rightarrow N$. It gives rise to the pull-back metric $f^\ast H$ on $M$. Thus it makes sense to consider the ratio $\frac{f^\ast H}{G}$ outside the zero section of $T^{1,0}M$. More precisely, let $(z;v)=(z^1,\cdots,z^n;v^1,\cdots,v^n)$ be local complex coordinates on $\tilde{M}:=T^{1,0}M\setminus\{\mbox{zero section}\}$ and $(w;\xi)=(w^1,\cdots,w^m,\xi^1,\cdots,\xi^m)$ be local complex coordinates on $\tilde{N}=T^{1,0}N\setminus\{\mbox{zero section}\}$. Then along the map $f$, we have
$$
w^i=f^i(z^1,\cdots,z^n),\quad \xi^i=(f_\ast)^i_z(v)=\frac{\partial f^i}{\partial z^\alpha}v^\alpha
$$
for $i=1,\cdots,m$.
Thus
\begin{equation}
(f^\ast H)(z;v)=H(f(z);(f_\ast)_z(v)).
\end{equation}

Now we define
\begin{equation}\label{u}
u(z;v):=\frac{(f^*H)(z;v)}{G(z;v)},\quad \forall (z;v)\in \tilde{M}.
\end{equation}
By Definition \ref{d-2.1}, we have
 $$u(z;\lambda v)=u(z;v),\quad\lambda\in\mathbb{C}\setminus\{0\},\forall (z;v)\in\tilde{M}.$$
  That is, $u$ is a well-defined non-negative continuous function defined on the projective bundle $PT^{1,0}M$.

Let $U$ be a coordinate neighborhood in $M$. For each $z\in U$ (we also use $z$ to denote local complex coordinates if it causes no confusion), we define
\begin{equation}\label{EQ-AC}
\tilde{u}(z):=\max_{ v\in PT_z^{1,0}M} u(z;v),
\end{equation}
or equivalently
\begin{equation}
\tilde{u}(z)=\max_{v\in S_z^{1,0}M}u(z;v),\label{B}
\end{equation}
where $S_z^{1,0}M=\{v\in T_z^{1,0}M|G(z;v)=1\}$.

 Now we show that $\tilde{u}$ defined by \eqref{EQ-AC}, or equivalently \eqref{B}, is a well-defined function on $M$. In deed, let $U_A$ and $U_B$ be two coordinate neighborhoods in $M$ with complex coordinates $z_A=(z_A^1,\cdots,z_A^n)$ and $z_B=(z_B^1,\cdots,z_B^n)$ respectively such that $\mathcal{U}_A:=\pi^{-1}(U_A)$ and $\mathcal{U}_B:=\pi^{-1}(U_B)$ are two coordinates neighborhoods in $PT^{1,0}M$ with the induced homogeneous coordinates $(z_A;[v_A])$ and $(z_B; [v_B])$ respectively.
  By definition,
 \begin{eqnarray*}
\tilde{u}(z_A)&=&\max_{v_A\in PT_{z_A}^{1,0}M} u(z_A;v_A),\quad
\tilde{u}(z_B)=\max_{v_B\in PT_{z_B}^{1,0}M}u(z_B; v_B).
\end{eqnarray*}
 Now if $PT_{z_A}^{1,0}M$ and $PT_{z_B}^{1,0}M$ represents the same fiber of $PT^{1,0}M$, then by the continuity of $u$, and the compactness of the fibers of $PT^{1,0}M$, we have $\tilde{u}(z_A)=\tilde{u}(z_B)$ on $U_A\cap U_B\neq \emptyset$, which shows that $\tilde{u}$ is a well-defined function on $M$.

More precisely, we have the following theorem.
\begin{theorem}\label{P-1}
Suppose that $(M,G)$ and $(N,H)$ are two strongly pseudoconvex complex Finsler manifolds. Let $f:M \rightarrow N$ be a holomorphic mapping. Then
\begin{equation*}
(f^\ast H)(z;v)\leq \tilde{u}(z)G(z;v),\quad \forall (z;v)\in T^{1,0}M,
\end{equation*}
where  $\tilde{u}$ is a continuous function on $M$.
\end{theorem}

\begin{proof}
By definition of $u$ and $\tilde{u}$, we have
$$
(f^\ast H)(z;v)=u(z;v)G(z;v)\leq \tilde{u}(z)G(z;v),\quad \forall (z,v)\in T^{1,0}M.
$$
The only thing we need to do is to that $\tilde{u}$ is a continuous function on $M$. It suffices to prove that $\tilde{u}$ is continuous at an arbitrary fixed point $z_0\in M$.

In deed, let $z_0 \in M$ be an arbitrary fixed point, such that $U$ is a coordinate neighborhood containing $z_0$ on $M$ with coordinates $z$ and $\mathcal{U}:=\pi^{-1}(U)\cong U\times \mathbb{CP}^{n-1}$ is the induced coordinate neighborhood on $PT^{1,0}M$ such that $(z;v)$ are homogeneous coordinates on $\mathcal{U}$.

By definition we have $\tilde{u}(z_0)=u(z_0;v_0)$ for some point $v_0\in S_{z_0}^{1,0}M$. Note that $u$ is continuous at the point $(z_0;v_0)\in \mathcal{U}$, thus for each $\varepsilon>0$, there exists a $\delta>0$, such that whenever $|z-z_0|<\delta$ and $|v-v_0|<\delta$, we have $|u(z;v)-u(z_0;v_0)|<\varepsilon$.

Let $(z;v)\in \mathcal{U}$ be an arbitrary point which is sufficiently close to $(z_0;v_0)$.
We assume that $\tilde{u}(z)=u(z;w)$ for some point $w\in S_z^{1,0}M$.
Then for a sufficient large number $\mathcal{N}>0$ we have $|\frac{w}{\mathcal{N}}-\frac{v_0}{\mathcal{N}}| <\delta$.
On the other hand, by the definition of $\tilde{u}$, we have
$$\tilde{u}(z)=u\Big(z,\frac{w}{\mathcal{N}}\Big),\quad \tilde{u}(z_0)=u\Big(z_0,\frac{v_0}{\mathcal{N}}\Big).$$
Thus we have
\begin{eqnarray*}
|\tilde{u}(z)-\tilde{u}(z_0)|
&=&\Big|u(z,\frac{w}{\mathcal{N}}\Big)-u\Big(z_0,\frac{v_0}{\mathcal{N}}\Big)\Big|\\
&\leq&\Big|u\Big(z,\frac{w}{\mathcal{N}}\Big)-u\Big(z_0,\frac{w}{\mathcal{N}}\Big)\Big|+\Big|u\Big(z_0,\frac{w}{\mathcal{N}}\Big)-u\Big(z_0,\frac{v_0}{\mathcal{N}}\Big)\Big|\\
&\leq&\varepsilon+\varepsilon\\
&=&2\varepsilon,
\end{eqnarray*}
which shows that $\tilde{u}(z)$ is continuous at the point $z_0$.
\end{proof}

\section{Some lemmas }
\setcounter{equation}{0}
\noindent

In this section, we obtain the estimation of the distance function on a K\"ahler manifold. Now we give some lemmas which are used in the proof of Theorem \ref{th-6.1}.
For our convenience, we also denote a K\"ahler manifold $(M,ds_M^2)$ by $(M,h)$ with $$h:=ds_M^2(v,\overline{v})=h_{\alpha\overline{\beta}}(z)dz^\alpha(v)\overline{dz^\beta(v)}=h_{\alpha\overline{\beta}}(z)v^\alpha\overline{v^\beta},\quad \forall v=v^\alpha\frac{\partial}{\partial z^\alpha}\in T_z^{1,0}M$$ depending on the actual situation.

The following lemma is well-known in Riemannian geometry, which is also a special case of Proposition 2.6.1 in \cite{abate}.
\begin{lemma}\label{b}
Let $(M,h)$ be a Hermitian manifold. If we take $g=\mbox{Re}(h)$ of $h$. Then
\begin{equation*}\begin{split}
g_{ab}u^a_1u^b_2=2h_{\alpha\bar{\beta}}v_1^{\alpha}\overline{v_2^{\beta}},\quad v_1,v_2 \in T^{1,0}_zM, u_j= v_j^\circ \quad \text{for} \quad j=1,2,
\end{split}\end{equation*}
where $^\circ: T^{1,0}M \rightarrow T_{\mathbb{R}}M$ is an $\mathbb{R}$-isomorphism given by $v^\circ=v+\bar{v}=u, \forall v \in T^{1,0}M.$
\end{lemma}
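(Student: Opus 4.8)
The plan is to prove this as a pointwise statement in linear algebra, fixing a point $z\in M$ and working in local holomorphic coordinates $z^\alpha = x^\alpha + \sqrt{-1}\,y^\alpha$, which furnish real coordinates $(x^1,\dots,x^n,y^1,\dots,y^n)$ on the real tangent space $T_{\mathbb{R},z}M$. First I would record the Wirtinger relations $\frac{\partial}{\partial z^\alpha} = \frac12\big(\frac{\partial}{\partial x^\alpha} - \sqrt{-1}\,\frac{\partial}{\partial y^\alpha}\big)$ together with its conjugate, and use them to compute the realification explicitly: writing $v = v^\alpha\frac{\partial}{\partial z^\alpha}$ with $v^\alpha = a^\alpha + \sqrt{-1}\,b^\alpha$, a direct substitution yields $v^\circ = v + \bar v = a^\alpha\frac{\partial}{\partial x^\alpha} + b^\alpha\frac{\partial}{\partial y^\alpha}$, which confirms that $^\circ$ is an $\mathbb{R}$-linear isomorphism $T^{1,0}_zM \cong T_{\mathbb{R},z}M$ onto the real tangent space.

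Next I would unpack the meaning of $g=\mathrm{Re}(h)$. The cleanest route is to extend $g$ $\mathbb{C}$-bilinearly to $T_{\mathbb{C},z}M = T^{1,0}_zM\oplus T^{0,1}_zM$ and exploit the type decomposition: since $h$ is Hermitian, $g$ pairs only the $(1,0)$ part against the $(0,1)$ part, so that $g(v,w)=0$ and $g(\bar v,\bar w)=0$ for $v,w\in T^{1,0}_zM$, while $g\big(v,\bar w\big) = h_{\alpha\bar\beta}v^\alpha\bar w^\beta$. These are precisely the components of the real symmetric tensor $h_{\alpha\bar\beta}\big(dz^\alpha\otimes d\bar z^\beta + d\bar z^\beta\otimes dz^\alpha\big)$ that represents $\mathrm{Re}(h)$, and it is this normalization that fixes the factor $2$ in the statement.

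With these two ingredients the computation is immediate. Expanding
$$g_{ab}u_1^a u_2^b = g(v_1^\circ, v_2^\circ) = g(v_1+\bar v_1,\, v_2+\bar v_2)$$
by bilinearity and discarding the vanishing $(1,0)$–$(1,0)$ and $(0,1)$–$(0,1)$ terms leaves $g(v_1,\bar v_2)+g(\bar v_1,v_2) = h_{\alpha\bar\beta}v_1^\alpha\bar v_2^\beta + h_{\alpha\bar\beta}v_2^\alpha\bar v_1^\beta$. Using the Hermitian symmetry $h_{\alpha\bar\beta}=\overline{h_{\beta\bar\alpha}}$ one checks that $h_{\alpha\bar\beta}v_2^\alpha\bar v_1^\beta = \overline{h_{\alpha\bar\beta}v_1^\alpha\bar v_2^\beta}$, so the two terms are complex conjugates and their sum equals $2\,\mathrm{Re}\big(h_{\alpha\bar\beta}v_1^\alpha\bar v_2^\beta\big)$, which is the asserted identity; the symmetrization is harmless because the left-hand side is symmetric in $u_1,u_2$, and when $v_1=v_2$ the right-hand side reduces to the manifestly real quantity $2h_{\alpha\bar\beta}v^\alpha\bar v^\beta$.

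I do not expect any genuine obstacle: the content is entirely bookkeeping. The only point requiring care is tracking conventions consistently — the factor $\tfrac12$ in the Wirtinger derivatives, the factor $2$ relating $g$ to $h$, and the precise sense in which $\mathrm{Re}$ enters the right-hand side — and making sure the chosen normalization of $g=\mathrm{Re}(h)$ is the one implicit in the statement so that the factor $2$ comes out correctly. Alternatively, one could simply invoke Proposition 2.6.1 of \cite{AP} and specialize its general strongly pseudoconvex Finsler formula to the Hermitian-quadratic case $G(z;v)=h_{\alpha\bar\beta}v^\alpha\bar v^\beta$, but the direct verification above is short and self-contained.
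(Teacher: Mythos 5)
Your verification is correct, but note that the paper does not actually prove this lemma at all: it simply declares it ``well-known'' and cites it as a special case of Proposition 2.6.1 in \cite{AP}. So your direct coordinate computation is a genuinely different (and more self-contained) route, and it is sound: the type decomposition of the $\mathbb{C}$-bilinear extension of $g$, the vanishing of the $(1,0)$--$(1,0)$ and $(0,1)$--$(0,1)$ pairings, and the conjugate-symmetry argument giving $g(v_1^\circ,v_2^\circ)=2\,\mathrm{Re}\bigl(h_{\alpha\bar\beta}v_1^\alpha\bar v_2^\beta\bigr)$ are all right. What your computation buys, beyond independence from \cite{AP}, is that it exposes two points the paper glosses over. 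First, the factor $2$ is a normalization convention: with $g$ taken as $\tfrac12\bigl(h_{\alpha\bar\beta}\,dz^\alpha\otimes d\bar z^\beta+\overline{h_{\alpha\bar\beta}}\,d\bar z^\alpha\otimes dz^\beta\bigr)$ one would get $\mathrm{Re}\,h(v_1,v_2)$ with no factor $2$; you correctly identify the unhalved symmetrization as the convention forced by the stated identity (and by the paper's later use $g(u)=2h(v)$ for unit vectors). Second, the right-hand side of the lemma as printed, $2h_{\alpha\bar\beta}v_1^\alpha\bar v_2^\beta$, is not real for general $v_1\neq v_2$, whereas the left-hand side is; the correct statement carries a $\mathrm{Re}$, exactly as your derivation produces and exactly as the paper itself silently reinserts when it applies the lemma in Lemma \ref{l-4.4}. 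Flagging that discrepancy is a genuine improvement on the source, not a defect of your argument.
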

\begin{lemma}(\cite{greene1})\label{l-4.2}
 Let $f$ be a real-valued function on a K\"ahler manifold $M$. Then
$$Lf(u_\circ,u_\circ)=\nabla^2f(u,u) + \nabla^2f(Ju,Ju),$$
where $Lf=4\frac{\partial^2f}{\partial z^\alpha\overline{z}^\beta}dz^\alpha\wedge d\overline{z}^\beta$ and $_\circ:T_{\mathbb{R}}M \rightarrow T^{1,0}M$ is an $\mathbb{R}$-isomorphism given by $v=u_\circ=\frac{1}{2}(u-\sqrt{-1}Ju)$ and $J$ is the canonical complex structure on $M$.
\end{lemma}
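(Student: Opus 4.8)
The plan is to establish this as a pointwise identity and to feed in the Kähler hypothesis through holomorphic normal coordinates. Fix an arbitrary point $p\in M$. Since $(M,h)$ is Kähler the complex structure $J$ is parallel, $\nabla J=0$, so one may choose local holomorphic coordinates $z^\alpha=x^\alpha+\sqrt{-1}\,y^\alpha$ centred at $p$ with $h_{\alpha\bar\beta}(p)=\delta_{\alpha\beta}$ and $\partial_\gamma h_{\alpha\bar\beta}(p)=\partial_{\bar\gamma}h_{\alpha\bar\beta}(p)=0$ (Kähler normal coordinates). The components of $g=\mathrm{Re}(h)$ in the real frame $\{\partial_{x^\alpha},\partial_{y^\alpha}\}$ are linear combinations of the $h_{\alpha\bar\beta}$ (as in Lemma \ref{b}), hence their first derivatives also vanish at $p$ and every Levi-Civita Christoffel symbol vanishes at $p$. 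Extending $u$ and $Ju$ with constant coefficients in these coordinates, the connection term in the Hessian drops out at $p$, so that
$$\nabla^2 f(u,u)(p)=u(uf)(p),\qquad \nabla^2 f(Ju,Ju)(p)=(Ju)(Juf)(p).$$

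Next I would pass to ordinary real second derivatives. Writing $u=u^{(x),\alpha}\partial_{x^\alpha}+u^{(y),\alpha}\partial_{y^\alpha}$ gives $Ju=-u^{(y),\alpha}\partial_{x^\alpha}+u^{(x),\alpha}\partial_{y^\alpha}$, while $u_\circ=\tfrac12(u-\sqrt{-1}Ju)$ is the $(1,0)$-part of $u$, with components $(u_\circ)^\alpha=u^{(x),\alpha}+\sqrt{-1}\,u^{(y),\alpha}$ in the frame $\{\partial_{z^\alpha}\}$. Substituting $\partial_{x^\alpha}=\partial_{z^\alpha}+\partial_{\bar z^\alpha}$ and $\partial_{y^\alpha}=\sqrt{-1}(\partial_{z^\alpha}-\partial_{\bar z^\alpha})$, I would expand both $u(uf)+(Ju)(Juf)$ and, by the definition of $Lf$,
$$Lf(u_\circ,u_\circ)=4\frac{\partial^2 f}{\partial z^\alpha\partial\bar z^\beta}(u_\circ)^\alpha\overline{(u_\circ)^\beta},$$
in terms of the real second derivatives $F_{ab}=\partial_a\partial_b f(p)$.

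What remains is then a linear-algebra identity. Decomposing $4\,\partial^2 f/\partial z^\alpha\partial\bar z^\beta$ into its $\alpha\beta$-symmetric part $F_{x^\alpha x^\beta}+F_{y^\alpha y^\beta}$ and its $\alpha\beta$-antisymmetric part $\sqrt{-1}(F_{x^\alpha y^\beta}-F_{x^\beta y^\alpha})$, and similarly splitting $(u_\circ)^\alpha\overline{(u_\circ)^\beta}$ into symmetric and antisymmetric pieces, the imaginary cross-terms vanish after summation (a symmetric tensor contracted with an antisymmetric one is zero), and the real part reproduces $\nabla^2 f(u,u)(p)+\nabla^2 f(Ju,Ju)(p)$ term by term, using only the symmetry $F_{ab}=F_{ba}$. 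The mechanism is that the purely holomorphic and anti-holomorphic ($(2,0)$ and $(0,2)$) second derivatives appearing in $\nabla^2 f(u,u)$ are exactly cancelled by those in $\nabla^2 f(Ju,Ju)$; adding the $J$-rotated term is precisely what projects the real Hessian onto its $J$-invariant part, which is the Levi form $Lf$. Since $p$ was arbitrary, the identity holds on all of $M$.

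The main obstacle, and the sole essential use of the Kähler condition, is the first step. The asserted identity fails for a general Hermitian metric: there one cannot find coordinates that are simultaneously holomorphic and Riemannian-normal, so the Levi-Civita Christoffel symbols cannot all be killed inside a holomorphic chart, and the residual first-order terms (controlled by $\nabla J\neq0$, equivalently by the torsion of the Chern connection) spoil the clean cancellation of the $(2,0)+(0,2)$ parts. Hence the crux is to justify, via $\nabla J=0$, that the Levi-Civita connection can be normalised to vanish at $p$ within a holomorphic coordinate system; granting that, the remainder is the bookkeeping above.
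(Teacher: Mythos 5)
Your argument is correct. Note that the paper offers no proof of this lemma at all --- it is quoted verbatim from Greene--Wu \cite{GW2} --- so there is no ``paper's approach'' to compare against; what you give is the standard proof, and its two ingredients are exactly right: the existence of holomorphic coordinates at $p$ in which $h_{\alpha\bar\beta}(p)=\delta_{\alpha\beta}$ and all first derivatives of $h_{\alpha\bar\beta}$ vanish (this is where, and only where, the K\"ahler condition enters, killing the Levi--Civita Christoffel symbols of $g=\mathrm{Re}(h)$ at $p$ and reducing $\nabla^2 f$ to the raw coordinate Hessian), followed by the pointwise identity. For the second step your symmetric/antisymmetric bookkeeping works, but it is worth recording the one-line version: extending the coordinate Hessian $F$ complex-bilinearly and writing $u=u_\circ+\overline{u_\circ}$, $Ju=\sqrt{-1}\,u_\circ-\sqrt{-1}\,\overline{u_\circ}$, one gets $F(u,u)+F(Ju,Ju)=4F(u_\circ,\overline{u_\circ})=4\frac{\partial^2 f}{\partial z^\alpha\partial\bar z^\beta}(u_\circ)^\alpha\overline{(u_\circ)^\beta}$, the $(2,0)$ and $(0,2)$ blocks cancelling with opposite signs exactly as you describe.
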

\begin{lemma}\label{l-4.3}
 Suppose that $(M,h)$ is a K\"ahler manifold with a pole $p$ such that its radial sectional curvature is bounded from below by a negative constant $-K^2$. Suppose that $\gamma:[0,r] \rightarrow M$ is a normal geodesic such that $\gamma(0)=p,\gamma(r)=z$. Denote $\rho(z)$ the distance function from $p$ to $z$. Then
$$\frac{\partial^2 \rho^2}{\partial z^{\alpha} \partial \bar{z}^{\beta}}v^{\alpha}\bar{v}^{\beta}\leq 2+\rho K,$$
where $v=\frac{1}{2}(u-\sqrt{-1}Ju)$, $J$ is the canonical complex structure on $M$ and $v=v^{\alpha}\frac{\partial}{\partial z^{\alpha}} \in T^{1,0}_zM$ is a unit vector satisfying $h(v)=1$.
\end{lemma}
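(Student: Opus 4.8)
The plan is to reduce the complex Hessian of $\rho^2$ to two copies of the real Hessian and then invoke the purely radial estimate of Theorem \ref{t-3.2}. Concretely, I would apply the K\"ahler identity of Lemma \ref{l-4.2} to the real-valued function $f=\rho^2$. Writing $v=\frac12(u-\sqrt{-1}Ju)$ as in the statement, this gives
\begin{equation*}
4\frac{\partial^2\rho^2}{\partial z^\alpha\partial\bar z^\beta}v^\alpha\bar v^\beta
= H(\rho^2)(u,u)+H(\rho^2)(Ju,Ju),
\end{equation*}
so it suffices to estimate the real Hessian $H(\rho^2)$ along the two real tangent directions $u$ and $Ju$. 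This is the step that transfers the entire problem from $(M,h)$ back onto the underlying Riemannian manifold $(M,g)$ with $g=\mathrm{Re}(h)$, where the machinery of Section 3 is available.

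For each real direction $w\in\{u,Ju\}$ I would use the gradient decomposition
\begin{equation*}
H(\rho^2)(w,w)=2\big(d\rho(w)\big)^2+2\rho\,H(\rho)(w,w),
\end{equation*}
which is precisely relation \eqref{eq-3.5} from the proof of Corollary \ref{c-3.3}. The first term is controlled metrically: since $\nabla\rho=T$ is a unit field, $\big(d\rho(w)\big)^2=\langle T,w\rangle^2\le |w|_g^2$ by Cauchy--Schwarz. The second term is where curvature enters, and here I would apply Theorem \ref{t-3.2} to the unit vector $w/|w|_g$ and rescale to obtain $H(\rho)(w,w)\le |w|_g^2\big(\tfrac1\rho+K\big)$. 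It is essential that Theorem \ref{t-3.2} requires only the radial sectional curvature to be bounded below, which matches the hypothesis of this lemma; this is the reason Theorem \ref{t-3.2}, rather than Corollary \ref{c-3.3} (whose hypothesis is on the full sectional curvature), is the correct input.

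Finally I would assemble the pieces. By Lemma \ref{b} the normalization $h(v)=1$ determines the $g$-lengths of $u$ and $Ju$, and since $M$ is K\"ahler the complex structure $J$ is a $g$-isometry, so $u$ and $Ju$ are orthogonal and of equal length; feeding these lengths into the two estimates above, summing, and dividing by $4$ produces the claimed inequality $\frac{\partial^2\rho^2}{\partial z^\alpha\partial\bar z^\beta}v^\alpha\bar v^\beta\le 2+\rho K$. I expect the only genuine obstacle to be the bookkeeping of constants: the clean value $2+\rho K$ emerges only after one carefully reconciles the factor relating $g=\mathrm{Re}(h)$ to $h$ supplied by Lemma \ref{b} with the factor $4$ appearing in Lemma \ref{l-4.2}, so that the metric contributions $\big(d\rho(w)\big)^2$ and the curvature contributions $H(\rho)(w,w)$ combine into the stated constant. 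No geometric input beyond Lemma \ref{l-4.2}, Theorem \ref{t-3.2}, and Lemma \ref{b} should be required.
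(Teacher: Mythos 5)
Your proposal is correct and follows essentially the same route as the paper: Lemma \ref{l-4.2} to split the complex Hessian into $H(\rho^2)(u,u)+H(\rho^2)(Ju,Ju)$, the decomposition \eqref{eq-3.5}, the radial Hessian bound, and Lemma \ref{b} for the normalization, the only difference being that the paper cites Corollary \ref{c-3.3} wholesale where you unfold its proof. Your observation that one should invoke Theorem \ref{t-3.2} directly is a genuine improvement in precision, since Corollary \ref{c-3.3} is formally stated under a full sectional curvature bound while both its proof and the present lemma require only the radial bound --- a mismatch the paper's own proof passes over silently.
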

\begin{proof}
By Lemma \ref{l-4.2}, we know that
\begin{equation*}\begin{split}
4\frac{\partial^2 \rho^2}{\partial z^{\alpha}\bar{z}^{\beta}}v^{\alpha}\bar{v}^{\beta}&=L(\rho^2)
=\nabla^2\rho^2(u,u)+\nabla^2\rho^2(Ju,Ju).
\end{split}\end{equation*}
By Corollary \ref{c-3.3} and Lemma \ref{b}, we have
\begin{equation*}\begin{split}
\frac{\partial^2 \rho^2}{\partial z^{\alpha} \partial \bar{z}^{\beta}}v^{\alpha}\bar{v}^{\beta}
\leq \frac{1}{4}\{2(2+\rho K)g(u)+2(2+\rho K)g(Ju)\}
=2+\rho K,
\end{split}\end{equation*}
where the last step we used the fact that $g(u)=g(Ju)=h(v)=1$.
\end{proof}
\begin{remark}\label{remark1}
If $\triangle$ is the unit disk in $\mathbb{C}$ endowed with the Poincar$\acute{\mbox{e}}$ metric $P$ whose Gaussian curvature is $-4$, and denote $\varrho(\zeta)$ the distance function from $0$ to $\zeta\in\triangle$. Then by Lemma \ref{l-4.3},
\begin{equation}
\frac{\partial^2\varrho^2(\zeta)}{\partial \zeta\partial\overline{\zeta}}\leq 2[1+2\varrho(\zeta)],\quad\forall \zeta\in\triangle.
\end{equation}
\end{remark}

\begin{lemma}\label{l-4.4}
 Suppose that $(M,h)$ is a K\"ahler manifold with a pole $p$. Suppose that $\gamma:[0,r] \rightarrow M$ is a geodesic such that $\gamma(0)=p,\gamma(r)=z, \dot{\gamma}(r)=T(z)$ and $h(T(z))=1$. Then
\begin{equation*}
2\rho(z)=\langle (\nabla \rho^2(z))_\circ,T(z)\rangle,
\end{equation*}
where $\langle\cdot, \cdot\rangle$ is the Hermitian inner product induced by $h$.
\end{lemma}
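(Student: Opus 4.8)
The plan is to transport the real Riemannian identity of Theorem \ref{t-3.4} to the Hermitian setting, using the $\mathbb{R}$-isomorphism $_\circ$ together with the comparison formula of Lemma \ref{b}. First I would recall that, because $p$ is a pole, the real gradient of the distance function coincides with the radial unit field, $\nabla\rho = T$, so that along $\gamma$ one has the elementary identity $\nabla(\rho^2) = 2\rho\,\nabla\rho = 2\rho\,T$ as a real vector field. This is the only place where the geometry of the distance function enters, and it is already contained in the discussion preceding Theorem \ref{t-3.4}.

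Next I would apply the $\mathbb{R}$-isomorphism $_\circ\colon T_{\mathbb{R}}M\to T^{1,0}M$ of Lemma \ref{l-4.2} to this real vector. Since $_\circ$ is $\mathbb{R}$-linear, it follows that $(\nabla\rho^2)_\circ = 2\rho\,(\nabla\rho)_\circ = 2\rho\,T(z)$, where $T(z)=(\dot\gamma)_\circ \in T^{1,0}_zM$ is exactly the $(1,0)$ radial vector normalized in the hypothesis by $h(T(z))=1$. Pairing with $T(z)$ in the Hermitian inner product then gives
\[
\langle (\nabla\rho^2(z))_\circ,\, T(z)\rangle = 2\rho\,\langle T(z),\, T(z)\rangle = 2\rho\,h(T(z)) = 2\rho,
\]
which is the desired equality, the last step using $h(T(z))=1$.

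An equivalent route, which I would mention as a cross-check, is to begin from the scalar identity $\langle \nabla\rho^2 \mid T\rangle = 2\rho$ of Theorem \ref{t-3.4} and convert the real inner product $g=\mathrm{Re}(h)$ into the Hermitian one through Lemma \ref{b}, which relates the two with a factor of $2$. Either way, the single delicate point is the bookkeeping of the factors of $2$: one coming from $\nabla\rho^2 = 2\rho\,\nabla\rho$, one from the $g$--$h$ comparison in Lemma \ref{b}, and one from the normalization $h(T(z))=1$. These must be reconciled with the convention that $\gamma$ is a (not necessarily unit-speed) geodesic satisfying $h(\dot\gamma)=1$ rather than $\langle\dot\gamma\mid\dot\gamma\rangle=1$; tracking them carefully is where any error would occur, but no new curvature estimate is required, so the argument reduces entirely to Theorem \ref{t-3.4} and Lemma \ref{b}.
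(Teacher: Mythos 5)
Your proposal is correct and follows essentially the same route as the paper: both reduce the claim to $\nabla\rho^2 = 2\rho\,\nabla\rho$ with the radial field identified with $T(z)$ under the $\circ$-isomorphisms, then convert between the real and Hermitian inner products via Lemma \ref{b} and use $h(T(z))=1$. Indeed, the ``cross-check'' you describe (starting from Theorem \ref{t-3.4} and converting $g=\mathrm{Re}(h)$ to $h$, noting the pairing is real) is precisely the argument the paper gives.
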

\begin{proof}
By the fact that $p$ is a pole, we know that $\rho(z)$ is equal to the length of minimizing geodesic connecting $p$ and $z$. Therefore, we have
$$\nabla \rho(z)=T(z)+ \overline{T(z)}. $$
By Theorem \ref{t-3.4} and $h(T(z))=1$, we have
\begin{equation}\label{A_3}
\langle \nabla \rho^2(z)|(T(z))^{\circ} \rangle= \langle 2\rho(z)(T(z)+ \overline{T(z)})|(T(z))^{\circ} \rangle=2\rho(z).
\end{equation}
Using Lemma \ref{b} and \eqref{A_3}, we have
\begin{equation*}\begin{split}
4\rho(z)&=\langle \nabla \rho^2(z)|(T(z))^{\circ} \rangle\\
&= \mbox{Re}[\langle (\nabla \rho^2(z))_\circ, T(z)\rangle]\\
&=\langle (\nabla \rho^2(z))_\circ, T(z)\rangle,
\end{split}\end{equation*}
where the last step we used the fact that $\langle (\nabla \rho^2(z))_\circ, T(z)\rangle$ is real.
This completes the proof.
\end{proof}

\begin{remark}\label{r-5.2}
If $\triangle$ is the unit disk in $\mathbb{C}$ endowed with the Poincar$\acute{\mbox{e}}$ metric $P$ whose Gaussian curvature is $-4$, and $\varrho(\zeta)$ is the distance function from $0$ to $\zeta\in\triangle$. Then by Lemma \ref{l-4.4},
\begin{equation}
2\varrho(\zeta)=\langle (\nabla \varrho^2(\zeta))_\circ,T(\zeta)\rangle.
\end{equation}
\end{remark}

\section{The Proof of Theorem \ref{mth}}

In this section, we prove the main theorem \ref{mth} in this paper. First we establish the following theorem.
\begin{theorem}\label{th-6.1}
 Suppose that $(M,ds_M^2)$ is a K\"ahler manifold with a pole $p$ such that its holomorphic sectional curvature is bounded from below by a constant $K_1$ and its sectional curvature is also bounded from below. Suppose that $(N,H)$ is a strongly pseudoconvex complex Finsler manifold whose holomorphic sectional curvature
is bounded from above by a negative constant $K_2$. Let $f:M\rightarrow N$ be a holomorphic mapping. Then
\begin{equation}\label{eq-6.1}
(f^*H)(z;dz) \leq \frac{K_1}{K_2}ds_M^2.
\end{equation}
\end{theorem}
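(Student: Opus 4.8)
The plan is to reduce the statement to a pointwise bound on the continuous function $\tilde{u}$ and then establish that bound by a maximum principle argument over an exhaustion of $M$ by geodesic balls. By Theorem \ref{P-1} (with $G=ds_M^2$) we already have $(f^\ast H)(z;dz)\le \tilde{u}(z)\,ds_M^2$, where $u=(f^\ast H)/ds_M^2$ is defined in \eqref{u} and $\tilde{u}(z)=\max_{v\in S_z^{1,0}M}u(z;v)$ as in \eqref{B}; hence it suffices to prove $\tilde{u}(z)\le K_1/K_2$ for every $z\in M$. If $f$ is constant there is nothing to prove, so assume $f$ is non-constant, whence $\tilde{u}>0$ on a nonempty open set. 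First I would fix a target point $z^\ast$ and, for $R>0$, work on the geodesic ball $B_p(R)$ with the barrier $\sigma=1-\rho^2/R^2$, which is smooth on $B_p(R)$ (since $p$ is a pole, $\rho^2$ is smooth), equals $1$ at $p$, and vanishes on $\partial B_p(R)$. The continuous function $\Phi=\sigma^2\tilde{u}$ then attains a positive interior maximum at some $z_0\in B_p(R)$ once $R$ is large.

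The heart of the argument is to differentiate at $z_0$, which is delicate because in the Finsler setting $\tilde{u}$ is merely continuous. To get around this, let $v_0\in S_{z_0}^{1,0}M$ realize $\tilde{u}(z_0)=u(z_0;v_0)$. By Lemma \ref{l-2.1} applied to the Kähler (hence Hermitian) domain there is an embedded complex curve $S\subset M$ through $z_0$ tangent to $v_0$ whose induced Gaussian curvature at $z_0$ equals the holomorphic sectional curvature $K_M(v_0)\ge K_1$; since $f_\ast v_0\neq0$ (because $\tilde{u}(z_0)>0$), the image $f(S)\subset N$ is a complex curve tangent to $f_\ast v_0$, and by Lemma \ref{l-2.2} the Gaussian curvature of its induced metric is $\le K_N(f_\ast v_0)\le K_2$. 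Writing $ds_M^2|_S=g_S\,|d\zeta|^2$ and $f^\ast H|_S=b\,|d\zeta|^2$ in a local coordinate $\zeta$ on $S$, the restriction $u|_S=b/g_S$ is smooth near $z_0$, and combining the two Gaussian curvature bounds (in the convention $(\log h)_{\zeta\bar\zeta}=-\tfrac12 hK$) yields at $z_0$ the differential inequality
\[
(\log u|_S)_{\zeta\bar\zeta}\;\ge\;\tfrac12\,g_S\bigl(K_1-u|_S\,K_2\bigr).
\]
Crucially $u|_S\le\tilde{u}$ on $S$ with equality at $z_0$, so $\Psi:=\sigma^2\,u|_S$ touches $\Phi|_S$ from below at $z_0$ and hence also has a local maximum at $z_0$; as $\Psi$ is smooth, the maximum principle applies.

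Finally, imposing the first- and second-order conditions $\partial_\zeta\log\Psi=0$ and $(\log\Psi)_{\zeta\bar\zeta}\le0$ at $z_0$ and substituting the curvature inequality gives, after the cancellation produced by the quadratic barrier,
\[
\tfrac12 g_S\bigl(K_1-\tilde{u}(z_0)K_2\bigr)\;\le\;2\frac{|\sigma_\zeta|^2}{\sigma^2}-2\frac{\sigma_{\zeta\bar\zeta}}{\sigma}.
\]
Here Lemma \ref{l-4.3} bounds $(\rho^2)_{\zeta\bar\zeta}\le(2+\rho K)g_S$, so $-\sigma_{\zeta\bar\zeta}/\sigma\le (2+\rho K)g_S/(R^2\sigma)$, while Cauchy--Schwarz together with $|\nabla\rho^2|=2\rho$ (cf. Lemma \ref{l-4.4} and Lemma \ref{b} for the factor matching) bounds $|\sigma_\zeta|^2\le C\rho^2 g_S/R^4$. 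Dividing by $g_S$, solving for $\tilde{u}(z_0)$ (recall $-K_2>0$), and multiplying by $\sigma(z_0)^2$ produces
\[
\sigma(z_0)^2\,\tilde{u}(z_0)\;\le\;\frac{K_1}{K_2}\,\sigma(z_0)^2+\frac{1}{-K_2}\Bigl(\frac{4C\rho^2}{R^4}+\frac{4(2+\rho K)\sigma}{R^2}\Bigr).
\]
Since $\sigma(z^\ast)^2\tilde{u}(z^\ast)=\Phi(z^\ast)\le\Phi(z_0)=\sigma(z_0)^2\tilde{u}(z_0)$ and $\rho(z_0)<R$ with $\sigma(z_0)\le1$, both error terms are $O(1/R)$ uniformly in the location of $z_0$; letting $R\to\infty$ (so $\sigma(z^\ast)\to1$ and, when $K_1\le0$, $\tfrac{K_1}{K_2}\sigma(z_0)^2\le\tfrac{K_1}{K_2}$) yields $\tilde{u}(z^\ast)\le K_1/K_2$, the desired bound (the case $K_1=0$ returns the constancy corollary). \textbf{The main obstacle} is precisely this last passage to the limit: because $z_0$ may drift toward $\partial B_p(R)$, one must guarantee that the gradient term $|\sigma_\zeta|^2/\sigma^2$ and Hessian term $\sigma_{\zeta\bar\zeta}/\sigma$ of the barrier contribute controllably, and it is exactly here that the radial-sectional-curvature Hessian estimate (Theorem \ref{t-3.2}, Lemma \ref{l-4.3}) and the gradient identity (Lemma \ref{l-4.4}) are indispensable; the non-smoothness of $\tilde{u}$, circumvented by the touching curve $S$ via Lemmas \ref{l-2.1} and \ref{l-2.2}, is the other essential technical point.
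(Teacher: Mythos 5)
Your argument is correct in its essentials and rests on exactly the same four ingredients as the paper (Lemmas \ref{l-2.1}, \ref{l-2.2}, \ref{l-4.3}, \ref{l-4.4} together with the Hessian estimate of Theorem \ref{t-3.2}), but the mechanics are genuinely different. The paper never works with the sup-function $\tilde{u}$ in this proof: it fixes an auxiliary holomorphic disk $\varphi:\Delta\to M$ with $h(\varphi'(\zeta))\equiv 1$, forms the smooth ratio $\sigma^2(\zeta)/\lambda^2(\zeta)$ on the disk, and applies the maximum principle on $B_b\subset\Delta$ to the product barrier $[a^2-\rho^2(\varphi(\zeta))]^2[b^2-\varrho^2(\zeta)]^2\sigma^2/\lambda^2$, letting $a,b\to\infty$ and then varying $\varphi$ to recover the bound direction by direction. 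You instead run the maximum principle directly on the geodesic ball $B_p(R)\subset M$ applied to $\sigma^2\tilde{u}$, and you handle the mere continuity of $\tilde{u}$ by a viscosity-style touching argument: the curve $S$ furnished by Lemma \ref{l-2.1} through the maximum point $z_0$, tangent to a maximizing direction $v_0$, gives a smooth $\Psi=\sigma^2 u|_S$ lying below $\Phi|_S$ with equality at $z_0$, so $\Psi$ inherits the interior maximum and the first- and second-order conditions may legitimately be imposed on it; your curvature inequality at $z_0$ and the barrier estimates via Lemmas \ref{l-4.3} and \ref{l-4.4}, with error terms $O(1/R)$ uniformly in the location of $z_0$, all check out. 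What each route buys: yours needs only one barrier and sidesteps the existence question, left implicit in the paper, of a holomorphic disk through every point in every prescribed direction satisfying $h(\varphi')\equiv 1$; the paper's route avoids the touching argument and keeps every object smooth on each fixed disk. One shared caveat, which you flag explicitly and the paper does not: the final passage $\frac{K_1}{K_2}\sigma(z_0)^2\le\frac{K_1}{K_2}$ (respectively, the paper's limit $a,b\to\infty$ of the $-K_1$ term) requires $K_1\le 0$, which is the only case in which the stated inequality has content anyway.
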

\begin{proof}
The key point of the proof is the construction of the auxiliary function \eqref{af} and then using maximum principle.

Let $B_a(p)$ be a closed geodesic ball in $(M,h)$ with its center at $p$ and radius of $a\in(0,+\infty)$. Let $\Delta$ be a unit disk with Poincar\'e metric $P$.
And let $B_b$ be a closed geodesic ball in $(\Delta, P)$ with its center at $0$ and radius of $b\in(0,+\infty)$. We denote the distance function from $0$ to $\zeta$ on $\Delta$ by $\varrho(\zeta)$, and the distance function from $p$ to $z$ on $M$ by $\rho(z)$. Suppose that $\varphi$ is any holomorphic mapping from $\Delta$ into $M$ such that $\varphi(B_b) \subset B_a(p)$ and $h(\varphi'(\zeta))\equiv1$.

The pull-back metric on $\Delta$ of $h$ on $M$ by the holomorphic mapping $\varphi:\Delta\rightarrow M$ is given by
 \begin{equation*}
 (\varphi^*h)(\zeta)= \lambda^2(\zeta)d\zeta d\bar{\zeta}.
 \end{equation*}
 Here we have denoted
 \begin{equation*}
 \lambda^2(\zeta):=h(\varphi(\zeta);\varphi'(\zeta)).
 \end{equation*}

 Note that since $h(\varphi'(\zeta))\equiv1$, we have $\varphi'(\zeta)\neq 0$. Thus $\lambda(\zeta) >0$.
 Now let
 \begin{equation*}
 (f\circ \varphi)^*H(\zeta) =\sigma^2(\zeta) d\zeta d\bar{\zeta}
 \end{equation*}
 be the pull-back metric on $\Delta$ of $H$ on $N$ by the holomorphic mapping $f\circ \varphi:\Delta\rightarrow N$. Here, we have denoted
 \begin{equation*}
 \sigma^2(\zeta):= H((f\circ\varphi)(\zeta);(f\circ\varphi)'(\zeta)).
 \end{equation*}

 i) If $(f\circ\varphi)'(\zeta)= 0$, then \eqref{eq-6.1} holds obviously with $\sigma(0)=0$.

 ii)  Suppose that $(f\circ\varphi)'(\zeta)\neq 0$.

 We define the following auxiliary function:
\begin{equation}
\Phi(\zeta):=[a^2-\rho^2(\varphi(\zeta))]^2[b^2-\varrho^2(\zeta)]^2\frac{\sigma^2(\zeta)}{\lambda^2(\zeta)}.\label{af}
\end{equation}
It is clear that $\Phi(\zeta)\geq 0$ for any $\zeta\in B_b$.

 Note that since the point $p$ is a pole of the K\"ahler manifold $M$,  it follows that  $\rho(z)$ is a smooth function on $M$. Thus the function $\Phi(\zeta)$ defined by \eqref{af} is  smooth for $\zeta\in B_b$. Moreover, $\Phi(\zeta)$ attains its maximum at some interior point $\zeta=\zeta_0\in B_b$ since $\Phi(\zeta) \rightarrow 0$ as $\varrho(\zeta)\rightarrow b$, or equivalently $\zeta$ tends to the boundary  $\partial B_b$ of $B_b$. Thus it suffice for us to seek an upper bound of $\Phi(\zeta_0)$ for an arbitrary $\varphi(\zeta)$ satisfying $(f\circ\varphi)'(\zeta)\neq 0$. We want to use the maximum principle.

 In the following, in order to abbreviate expression of formulas. We denote $\varphi(\zeta)=z=(z^1,\cdots,z^n),\varphi(\zeta_0)=z_0=(z_0^1,\cdots,z_0^n)$, $\varphi'(\zeta)=v=(v^1,\cdots,v^n)$ and $\varphi'(\zeta_0)=v_0=(v_0^1,\cdots,v_0^n)$.

Since $\Phi(\zeta)$ is smooth for $\zeta\in B_b$ and attains its maximum at the interior point $\zeta_0\in B_b$, it necessary that at the point $\zeta=\zeta_0$:
\begin{equation}
0=\frac{\partial}{\partial \zeta} \log \Phi(\zeta)\quad\mbox{and}\quad
0 \geq \frac{\partial^2}{\partial \zeta \partial \bar{\zeta}} \log\Phi(\zeta).\label{sod}
\end{equation}

Substituting \eqref{af} in the second inequality in \eqref{sod}, we have
\begin{equation}\begin{split}\label{EQ-6.2}
0 \geq &2\frac{\partial^2}{\partial \zeta \partial \bar{\zeta}}\log[a^2-\rho^2(z)]
+\frac{\partial^2 }{\partial \zeta \partial \bar{\zeta}}\log \sigma^2(\zeta)-\frac{\partial^2 }{\partial \zeta \partial \bar{\zeta}}\log\lambda^2(\zeta)+2\frac{\partial^2 }{\partial \zeta \partial \bar{\zeta}}\log [b^2-\varrho^2(\zeta)]\\
=&-2[a^2-\rho^2(z)]^{-1}\frac{\partial^2\rho^2(z)}{\partial z^\alpha\partial\bar{z}^\beta}v^{\alpha}\bar{v}^{\beta}-2[a^2-\rho^2(z)]^{-2}\Big|\frac{\partial\rho^2(z)}{\partial z^\alpha}v^{\alpha}\Big|^2\\
&+\frac{\partial^2 }{\partial \zeta \partial \bar{\zeta}}\log \sigma^2(\zeta)-\frac{\partial^2 }{\partial \zeta \partial \bar{\zeta}}\log \lambda^2(\zeta)\\
&-2[b^2-\varrho^2(\zeta)]^{-1}\frac{\partial^2 \varrho^2(\zeta)}{\partial \zeta \partial \bar{\zeta}}-2[b^2-\varrho^2(\zeta)]^{-2}\Big|\frac{\partial \varrho^2(\zeta)}{\partial \zeta}\Big|^2.
\end{split}\end{equation}
By Lemma \ref{l-2.1}, \ref{l-2.2} and the curvature assumptions of $h$ and $H$ in Theorem \ref{th-6.1}, we have
\begin{equation}
\frac{\partial^2 }{\partial \zeta \partial \bar{\zeta}}\log \sigma^2(\zeta)\geq -2K_2\sigma^2(\zeta),\quad \frac{\partial^2 }{\partial \zeta \partial \bar{\zeta}}\log \lambda^2(\zeta)\leq -2K_1\lambda^2(\zeta).\label{cc1}
\end{equation}

By Lemma \ref{l-4.3}, we have
\begin{equation}\begin{split}\label{EQ-6.3}
\frac{\partial^2\rho^2(z)}{\partial z^\alpha\partial \bar{z}^\beta}v^{\alpha}\bar{v}^{\beta}
 \leq 2+\rho K
\leq 2+aK,
\end{split}\end{equation}
where in the last step we used the inequality $\rho(z) \leq a$.

By the Remark \ref{remark1}, at $\zeta=\zeta_0$, we have
\begin{equation}\label{A-7}
\frac{\partial^2 \varrho^2(\zeta)}{\partial \zeta \partial \bar{\zeta}} \leq 2[1+2\varrho(\zeta_0)]\leq 2(1+2b)
\end{equation}
since $\varrho(\zeta_0)\leq b$.

In order to estimate the first order term of $\rho^2(z)$ and $\varrho^2(\zeta)$ in \eqref{EQ-6.2}, we use normal coordinates. Since $M$ is a K\"ahler manifold, we can choose coordinates around $z_0$ such that at the point $z_0$, we have
$$h_{\alpha \bar{\beta}}(z_0)=\delta_{\alpha \beta}, \quad 1 \leq \alpha, \beta \leq n.$$

Thus by Lemma \ref{l-4.4},  at $\zeta=\zeta_0$, we have
\begin{equation}\begin{split}\label{EQ-6.4}
\Big|\frac{\partial\rho^2(z_0)}{\partial z^\alpha}v_0^{\alpha}\Big|
&\leq |\langle (\nabla(a^2-\rho^2(z_0)))_\circ,T(z_0)\rangle|\\
&=|\langle (\nabla(\rho^2(z_0)))_\circ,T(z_0)\rangle|=2\rho(z_0) \\
&\leq 2a,
\end{split}\end{equation}
where the last step we used the fact that $\rho(z_0) \leq a$.
For the same reasons as \eqref{EQ-6.4}, that is, by the Remark \ref{r-5.2},  at $\zeta=\zeta_0$, we have
\begin{equation}\label{A-8}
\Big|\frac{\partial \varrho^2(\zeta)}{\partial \zeta}\Big| \leq 2b.
\end{equation}

Substituting \eqref{cc1}, \eqref{EQ-6.3}, \eqref{A-7}, \eqref{EQ-6.4},  \eqref{A-8} into \eqref{EQ-6.2}, we have (at $\zeta=\zeta_0$),
\begin{equation*}
0 \geq -K_2\sigma^2(\zeta_0)+K_1\lambda^2(\zeta_0)-\frac{2+aK}{a^2-\rho^2(z_0)}-\frac{2a^2}{[a^2-\rho^2(z_0)]^2}-
\frac{2(1+2b)}{b^2-\varrho^2(\zeta_0)}-\frac{2b^2}{[b^2-\varrho^2(\zeta_0)]^2}.
\end{equation*}
Rearranging terms, we get
\begin{equation*}\begin{split}
&\frac{1}{\lambda^2(\zeta_0)}\Big\{(2+aK)[a^2-\rho^2(z_0)][b^2-\varrho^2(\zeta_0)]^2+2a^2[b^2-\varrho^2(\zeta_0)]^2\\
&+2(1+2b)[b^2-\varrho^2(\zeta_0)][a^2-\rho^2(z_0)]^2+2b^2[a^2-\rho^2(z_0)]^2\Big\}-K_1[a^2-\rho^2(\zeta_0)]^2[b^2-\varrho^2(\zeta_0)]^2 \\
&\geq -K_2\frac{\sigma^2(\zeta_0)}{\lambda^2(\zeta_0)}[a^2-\rho^2(\zeta_0)]^2[b^2-\varrho^2(\zeta_0)]^2=-K_2\Phi(\zeta_0)\\
&\geq-K_2 \Phi(\zeta)=-K_2[a^2-\rho^2(\varphi(\zeta))]^2[b^2-\varrho^2(\zeta)]^2\frac{\sigma^2(\zeta)}{\lambda^2(\zeta)}
\end{split}\end{equation*}
for any $\zeta\in B_b$. That is, we have
\begin{equation*}\begin{split}
&\frac{1}{\lambda^2(\zeta_0)}\Big\{(2+aK)[a^2-\rho^2(z_0)][b^2-\varrho^2(\zeta_0)]^2+2a^2[b^2-\varrho^2(\zeta_0)]^2\\
&+2(1+2b)[b^2-\varrho^2(\zeta_0)][a^2-\rho^2(z_0)]^2+2b^2[a^2-\rho^2(z_0)]^2\Big\}-K_1[a^2-\rho^2(\zeta_0)]^2[b^2-\varrho^2(\zeta_0)]^2 \\
&\geq-K_2[a^2-\rho^2(\varphi(\zeta))]^2[b^2-\varrho^2(\zeta)]^2\frac{\sigma^2(\zeta)}{\lambda^2(\zeta)}
\end{split}\end{equation*}
for any $\zeta\in B_b$. Now divided by $a^4b^4$ on both side of the above inequality and then letting $a \rightarrow +\infty$ and $b \rightarrow +\infty$, respectively, we obtain
$$\frac{\sigma^2(\zeta)}{\lambda^2(\zeta)} \leq \frac{K_1}{K_2}$$
 for any holomorphic mapping $\varphi$ from $\Delta$ into $M$ satisfying $(f\circ\varphi)'(\zeta)\neq 0$. By the arguments of i) and ii), it follows that
$$(f^*H)(z;dz) \leq \frac{K_1}{K_2}ds_M^2.$$
This completes the proof.
\end{proof}


\begin{theorem}\label{t-1.3}
 Suppose that $(M,ds_M^2)$ is a complete K\"ahler manifold with holomorphic sectional curvature bounded from below by a constant $K_1$ and sectional curvature bounded from below, while $(N,H)$ is a strongly pseudoconvex complex Finsler manifold with holomorphic sectional curvature of the Chern-Finsler connection bounded from above by a constant $K_2<0$. Then any holomorphic map $f$ from $M$ into $N$ satisfies
\begin{equation}
(f^*H)(z;dz) \leq \frac{K_1}{K_2}ds_M^2.
\end{equation}
\end{theorem}

\begin{proof}
If $(M,ds_M^2)$ is a complete K\"ahler manifold without cut points, then all the Lemmas \ref{l-2.1}, \ref{l-2.2},\ref{l-4.3},  \ref{l-4.4} used in the proof of Theorem \ref{th-6.1}
 still hold. In this case, Theorem \ref{t-1.3} holds obviously.

If $(M,ds_M^2)$ is a complete K\"ahler manifold with cut points. The proof essentially goes the same lines as that in Chen-Cheng-Lu \cite{chen}. We use the notations in Theorem \ref{th-6.1} and its proof. Let $p$ be an arbitrary point. And $\varPhi(\zeta)$ attains its maximum value at $\zeta_0$, that is, $\varphi(\zeta_0)=z_0$. Since $(M,ds_M^2)$ is complete, thus by Hopf-Rinow theorem for a Riemannian metric, there exists a minimizing geodesic $\gamma:[0,1]\rightarrow M$ joining $p$ and $z_0$ such that $\gamma(0)=p$ and $\gamma(1)=z_0$. If there is a $t_0\in(0,1)$ such that $\gamma(t_0)=p_1$ is the first cut point to the point $z_0$ along the inversely directed geodesic $\gamma_1:=\gamma(1-t)$ for all $t\in[0,1]$. Let $\varepsilon>0$ be a sufficient small given number, then it is clear that $\gamma(t_0+\varepsilon)$ is not a cut point of $z_0$ with respect to the geodesic $\gamma_1$. Define $\tilde{\rho}(p,z):=\rho(p,\gamma(t_0+\varepsilon))+\rho(\gamma(t_0+\varepsilon),z)$. Then, using the triangle inequality, we have
$$
\rho(p,z)\leq \tilde{\rho}(p,z)\quad\mbox{and}\quad \rho(p,z_0)=\tilde{\rho}(p,z_0).
$$
So that $$[a^2-\tilde{\rho}^2(\varphi(\zeta))]^2[b^2-\varrho^2(\zeta)]^2\frac{\sigma^2(\zeta)}{\lambda^2(\zeta)}$$ is smooth at the point $\zeta_0$ and we have
$$
[a^2-\tilde{\rho}^2(\varphi(\zeta))]^2[b^2-\varrho^2(\zeta)]^2\frac{\sigma^2(\zeta)}{\lambda^2(\zeta)}\leq\varPhi(\zeta)$$
 and
 $$[a^2-\tilde{\rho}^2(\varphi(\zeta_0))]^2[b^2-\varrho^2(\zeta_0)]^2\frac{\sigma^2(\zeta_0)}{\lambda^2(\zeta_0)}=\varPhi(\zeta_0).$$
 Now  by passing the discussion of $\tilde{\rho}(p,z)$ to $\rho(\gamma(t_0+\varepsilon),z)$, the remaining proof goes the same lines as Theorem \ref{th-6.1}. This completes the proof.
\end{proof}

\begin{remark}
There are many examples of complete K\"ahler manifolds. By Lempert's fundamental results in \cite{lempert}, the Kobayashi metrics on any bounded strongly convex domains in $\mathbb{C}^n$ with smooth boundaries are strongly pseudoconvex complex Finsler metrics with constant holomorphic sectional curvature $-4$. These serve as the most important and nontrivial examples satisfying our assumption of the target manifold $N$ in Theorem \ref{th-6.1}.
\end{remark}

  In general, however, even on strongly convex domains in $\mathbb{C}^n$ with smooth boundaries, the Kobayashi metrics are not explicitly given and uncomputable. The following theorem shows that every bounded domain in $\mathbb{C}^n$ can be endowed with an explicit non-Hermitian quadratic strongly pseudoconvex complex Finsler metric with holomorphic sectional curvature bounded above by a negative constant.

\begin{theorem}\label{example}
Suppose that $D\subset \mathbb{C}^n(n\geq 2)$ is a bounded domain. Then $D$ admits a non-Hermitian quadratic strongly pseudoconvex complex Finsler metric $G:T^{1,0}D\rightarrow \mathbb{R}^+$ such that its holomorphic sectional curvature is bounded from above by a negative constant.
\end{theorem}
\begin{proof}
Without lose of generality we assume that $D$ contains the origin $0\in\mathbb{C}^n$.
 Set
 $$M_0:=\sup_{z\in D}\{\|z\|\}>0,$$
 here $\|\cdot\|$ denotes the canonical complex Euclidean norm of $z\in D$.
For any $z\in D, v\in T_z^{1,0}D$, we denote
$$r:=\langle v, v \rangle=\|v\|^2,\quad t:=\langle z, z \rangle=\|z\|^2, \quad s:=\frac{|\langle z,v\rangle|^2}{r},$$
where $\langle\cdot,\cdot\rangle$ denotes the canonical complex Euclidean inner product in $\mathbb{C}^n$.

For every constants $a,b$ satisfying $0\neq a\in\mathbb{R}$ and $b<\frac{1}{M_0}$, we define
\begin{equation}
G(z,v):=r\phi(t,s)\quad\mbox{with}\quad \phi(t,s)=e^{at+bs},\quad \forall z\in D, v\in T_z^{1,0}D.\label{G}
\end{equation}
It is clear that $G:T^{1,0}D\rightarrow \mathbb{R}^+$ is a non-Hermitian quadratic complex Finsler metric on $D$.
By Proposition 2.6 in \cite{zhong}, it is easy to check that the function $G$ defined in \eqref{G} is a strongly pseudoconvex complex Finsler metric on $D$. Actually it is easy to check that $G$ is a strongly pseudoconvex weakly complex Berwald metric on $D$  satisfying $g(t)\equiv 0$ in (3.23) of Theorem 3.4 in \cite{zhong}.

By Remark 3.9 in \cite{zhong}, the holomorphic sectional curvature of $G$ is given by
$$K_G(z,v)=-\frac{2(a+b)}{\phi},\quad \forall 0\neq v\in T_z^{1,0}D.$$
Thus if $a+b>0$,
 then $K_G$ is negative for any $z\in D$ and $0\neq v\in T_z^{1,0}D$. Now since $t \geq 0$ and $s=\frac{|\langle z,v\rangle|^2}{r} \leq t$,  we have $\phi(t,s)= e^{at+bs} \leq e^{(a+b)t}\leq c$ for a positive constant $c:=e^{(a+b)M_0}$ since $D$ is a bounded domain. Therefore $K_G$  is bounded above by a negative constant, that is,
 $$K_G(z,v)\leq -\frac{2(a+b)}{c},\quad \forall z\in D, 0\neq v\in T_z^{1,0}D.$$
\end{proof}

\section{Some applications of the Schwarz lemma}

In this section, we give some applications of Theorem \ref{th-6.1} and \ref{t-1.3}.
The following corollaries are direct consequences of Theorem \ref{th-6.1} and \ref{t-1.3},respectively.

\begin{corollary}\label{c-6.1}
 Suppose that $(M,ds_M^2)$ is a K\"ahler manifold with a pole $p$ such that its holomorphic sectional curvature is non-negative and its sectional curvature is bounded from below. Suppose that $(N,H)$ is a strongly pseudoconvex complex Finsler manifold whose holomorphic sectional curvature satisfies $K_H \leq K_2$ for a constant $K_2<0$. Then any holomorphic mapping $f$ from $M$ into $N$ is a constant.
\end{corollary}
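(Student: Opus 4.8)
The plan is to obtain this as an immediate consequence of Theorem \ref{th-6.1} by taking the lower curvature bound on the domain to be zero. First I would verify that $M$ meets the hypotheses of Theorem \ref{th-6.1} with $K_1=0$. Since the holomorphic sectional curvature of $(M,ds_M^2)$ is assumed non-negative, it is in particular bounded from below by the constant $K_1=0$; and the sectional curvature of $M$ is bounded from below by assumption (which also forces its restriction to the radial planes, hence the radial sectional curvature, to be bounded from below). The target $(N,H)$ already satisfies the required hypothesis $K_H\leq K_2<0$. Thus every assumption of Theorem \ref{th-6.1} holds with the choice $K_1=0$.

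Applying Theorem \ref{th-6.1} then gives
\[
(f^*H)(z;dz)\leq \frac{K_1}{K_2}\,ds_M^2=0,
\]
since $K_1=0$ and $K_2<0$. On the other hand, $H$ is a complex Finsler metric, so by definition $(f^*H)(z;v)=H(f(z);(f_\ast)_z v)\geq 0$ for every $(z;v)\in T^{1,0}M$. Combining the two inequalities forces $(f^*H)(z;v)=0$ identically on $T^{1,0}M$.

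The final step is to upgrade the vanishing of the pull-back metric to the vanishing of the differential. By property (ii) of Definition \ref{d-2.1}, $H(f(z);(f_\ast)_z v)=0$ if and only if $(f_\ast)_z v=0$; hence $(f_\ast)_z v=0$ for all $v\in T_z^{1,0}M$ and all $z\in M$, i.e.\ $df\equiv 0$ on $M$. Because $M$ possesses a pole $p$, the map $\exp_p$ is a diffeomorphism of the connected space $T_pM$ onto $M$, so $M$ is connected and a holomorphic map with identically vanishing differential must be constant. I do not expect a genuine obstacle here, as the result is a direct specialization of Theorem \ref{th-6.1}; the only points requiring care are that "non-negative holomorphic sectional curvature" legitimately supplies the lower bound $K_1=0$, and that the non-degeneracy of $H$ recorded in Definition \ref{d-2.1}(ii) is precisely what converts $f^*H\equiv 0$ into $df\equiv 0$.
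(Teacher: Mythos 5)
Your proof is correct and follows exactly the route the paper intends: the paper states this corollary as a direct consequence of Theorem \ref{th-6.1}, obtained by taking $K_1=0$ so that $f^*H\leq 0$, and then using the non-degeneracy of $H$ (Definition \ref{d-2.1}(ii)) to conclude $df\equiv 0$. Your write-up supplies these details cleanly and there is no gap.
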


\begin{corollary}\label{ca}
Suppose that $(M,h)$ is a complete K\"ahler manifold such that its holomorphic sectional curvature is non-negative and its  sectional curvature is bounded from below. Suppose that $(N,H)$ is a strongly pseudoconvex complex Finsler manifold whose holomorphic sectional curvature satisfies $K_H \leq K_2$ for a constant $K_2<0$. Then any holomorphic mapping $f$ from $M$ into $N$ is a constant.
\end{corollary}

Recently in Hermitian geometry, Yang and Zheng \cite{yang} found some partial evidences to the Conjecture 1.6 (d) in \cite{yang}. To state their results, we need the following definition of holomorphic fibration and generic fiber.
\begin{definition} (\cite{zheng})
 A holomorphic mapping $f : M \rightarrow N$ between two compact complex manifolds is called a holomorphic fibration, if it is surjective and connected (i.e.,$f^{-1}(w)$ is connected for any $w \in M$). For any regular value $w$ of $f$, if the fiber $f^{-1}(w)$ is a compact complex submanifold of $M$, we will call such a fiber a generic fiber of $f$.
\end{definition}

Next let's recall the Hartogs phenomenon on a complex manifold.
A fundamental property of holomorphic function on a domain in $\mathbb{C}^n$ with $n\geq 2$ is  the  Hartogs phenomenon which states that a holomorphic function defined on a spherical shell
\begin{equation*}
D^n_{a,b}=\{z \in \mathbb{C}^n|0\leq a <\|z\|^2<b\}
\end{equation*}
can be extended to the entire ball $B^n_b$ (of radius $b$ centered at the origin). In general, one has the following definition.
\begin{definition}(\cite{zheng})
A complex manifold $M$ is said to obey the Hartogs phenomenon, if for any $1>a>0$, any holomorphic mapping from $D^2_{a,1}$ into $M$ can be extended to a holomorphic mapping from the unit ball $B^2$ into $M$.
\end{definition}
In \cite{zheng}, Griffiths and Shiffman proved the following theorem.
\begin{theorem}(\cite{zheng})
Any complete Hermitian manifold with nonpositive holomorphic sectional curvature obeys the Hartogs phenomenon.
\end{theorem}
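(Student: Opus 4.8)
The plan is to reduce the extension problem to an a priori confinement estimate for the image and then invoke the classical Hartogs phenomenon in local coordinates. Writing $H=\{z\in B^2:\|z\|^2\le a\}$ for the central hole, the only genuine obstruction to extending a holomorphic map $f:D^2_{a,1}\to M$ across $H$ is that $f$ might leave every compact subset of $M$ as $z$ approaches $\partial H$. If instead $f(D^2_{a,1})$ is contained in a fixed compact set $K\subset M$, one covers $K$ by finitely many coordinate charts; on each chart the components of $f$ are ordinary holomorphic functions on pieces of the shell, which extend to $B^2$ by the classical Hartogs extension theorem in $\mathbb{C}^2$, and a consistency check glues these local extensions into a single holomorphic $\tilde f:B^2\to M$. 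So the heart of the matter is confining the image.

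To confine the image I would produce a plurisubharmonic exhaustion function on the target. Fix $o\in M$ and set $\phi=d_M(\cdot,o)^2$, where $d_M$ is the distance of the Hermitian metric; by completeness and Hopf--Rinow the sublevel sets $\{\phi\le C\}$ are compact, so $\phi$ is proper. The key claim is that nonpositivity of the holomorphic sectional curvature forces $\phi$ to be plurisubharmonic in the sense of currents (the distance function being only Lipschitz, one argues after a standard smoothing). Granting this, for any holomorphic disk $\varphi:\Delta\to M$ the composition $\phi\circ\varphi$ is subharmonic, since a plurisubharmonic function composed with a holomorphic map is subharmonic; hence $\psi:=\phi\circ f$ is plurisubharmonic on $D^2_{a,1}$, and in particular subharmonic on every slice of $D^2_{a,1}$ by a complex line.

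Next I would run the Hartogs continuity argument on slices, exactly as for subharmonic functions. Slicing by the lines $\{z_1=c\}$, for $|c|^2\ge a$ the slice is a full disk on which $\psi$ is already defined, while for $|c|^2<a$ it is an annulus into whose inner disk the bound must be propagated. The maximum principle for $\psi$ on these annuli, together with the continuity principle (the supremum over the outer boundary dominates the values on the family of shrinking inner circles), yields a uniform bound $\psi\le C$ on all of $D^2_{a,1}$, with $C$ determined by $\sup\psi$ near the outer sphere $\|z\|^2=1$. Consequently $f(D^2_{a,1})\subset\{\phi\le C\}$, which is compact; this is the required confinement, and we conclude as in the first paragraph.

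The hard part is the plurisubharmonicity of $\phi=d_M(\cdot,o)^2$ under only a holomorphic sectional curvature hypothesis in a merely Hermitian (not necessarily K\"ahler) setting. For nonpositive sectional curvature together with the K\"ahler condition this is the classical Cartan--Hadamard convexity of $d_M^2$, but here one controls the curvature only in holomorphic two-planes, so the Levi form $i\partial\bar\partial\,d_M^2$ must be estimated directly along complex tangent directions. I expect this to require the same Ahlfors--Schwarz type computation used in the proof of Theorem \ref{th-6.1}, comparing the second variation of arclength along the radial geodesic against the holomorphic sectional curvature bound, with extra care for the torsion terms that appear because the metric is only Hermitian. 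An equivalent route, sidestepping the regularity of $d_M$, is to pass to the graph $\Gamma_f\subset D^2_{a,1}\times M$ and extend it as an analytic subvariety of $B^2\times M$ via Bishop's theorem on removable singularities of sets of locally finite volume; there the same curvature-driven area estimate for holomorphic disks replaces the confinement step, and checking that the extended variety is again a graph completes the proof.
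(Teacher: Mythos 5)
First, note that the paper itself gives no proof of this statement: it is quoted from the literature (Griffiths--Shiffman, via \cite{Z}), so your proposal can only be judged on its own merits. As it stands it has two genuine gaps. The central one is the claim that nonpositive \emph{holomorphic sectional} curvature forces $\phi=d_M(\cdot,o)^2$ to be plurisubharmonic. The Levi form of $\rho^2$ at $z$ in a direction $v$ is $\tfrac14\big[H(\rho^2)(u,u)+H(\rho^2)(Ju,Ju)\big]$, and the Hessian $H(\rho^2)(u,u)$ is governed by the second variation of arclength, i.e.\ by the sectional curvature of the \emph{real radial plane} $\mathrm{span}(T,u)$, which is not a holomorphic plane; holomorphic sectional curvature says nothing about it. This is exactly why the present paper's Lemma \ref{l-4.3} and Theorem \ref{t-3.2} must assume a bound on the \emph{radial sectional} curvature in addition to the holomorphic one. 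Worse, for a merely Hermitian metric the hypothesis concerns the Chern-connection holomorphic sectional curvature, while the distance function and its Hessian are Riemannian (Levi-Civita) objects; the two curvatures differ by torsion terms. So the exhaustion function you need is not available from the stated hypotheses, and without a pole its regularity is a further obstacle.

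The second gap is that, even granting plurisubharmonicity of $\phi$, the confinement step does not close. On a slice $\{z_1=c\}$ with $|c|^2<a$ the domain is an annulus whose \emph{inner} boundary lies on $\{\|z\|^2=a\}$, where nothing is known about $f$; the maximum principle on an annulus bounds $\psi$ by its values on \emph{both} boundary circles, and the Kontinuit\"atssatz requires $\psi$ to be defined on a neighbourhood of each closed disk in the family, which fails as soon as the disks enter the hole. Your argument is therefore circular: to bound $\psi$ near the hole you implicitly need the extension you are trying to prove. The mechanism that actually works (and is what \cite{SS} generalizes to the Finsler case) is different: by the curvature-decreasing property underlying Lemmas \ref{l-2.1} and \ref{l-2.2}, $K\le 0$ implies that $\log\mu$ is \emph{subharmonic} for the density $\mu^2$ of the metric pulled back to any holomorphic curve. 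One then runs an open--closed argument over the family of disk slices: on the set of parameters where the slice map already extends to the full disk, the maximum principle for $\log\mu$ on the \emph{full} disk bounds the derivative by its values on an outer circle lying in the shell, giving a uniform Lipschitz bound; completeness plus Arzel\`a--Ascoli then yields a holomorphic limit on the closure of that parameter set, proving it is closed. I would recommend replacing the $d^2$-plurisubharmonicity route by this derivative estimate.
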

In 2013, Shen and Shen \cite{shen} proved the following theorem.
\begin{theorem}\label{T-3.3}(\cite{shen})
Any complete strongly pseudoconvex Finsler manifold with non-positive holomorphic sectional curvature obeys the Hartogs phenomenon.
\end{theorem}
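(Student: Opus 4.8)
The plan is to extend an arbitrary holomorphic map $f:D^2_{a,1}\to N$ across the inner hole $\{\|z\|^2\le a\}$ to all of $B^2$, the two engines being a curvature-driven distance estimate that confines the image of $f$ to a compact part of $N$, and a Hartogs-type extension theorem for analytic subsets of $B^2\times N$. I would organize the argument in three stages: (i) prove that $f(D^2_{a,1})$ is relatively compact in $N$; (ii) pass to the graph of $f$ and extend it as an analytic set; (iii) verify that the extended set is again a graph, producing the desired holomorphic extension.

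For stage (i) I would slice the shell by complex lines $\ell\subset\mathbb{C}^2$. For a line meeting the inner ball, $\ell\cap D^2_{a,1}$ is conformally an annulus $A_\ell$, and $f|_{A_\ell}$ is a holomorphic map of $A_\ell$ into $N$. Writing the pulled-back metric as $(f|_{A_\ell})^\ast H=\mu^2\,d\zeta\,d\overline{\zeta}$ in a conformal parameter $\zeta$, the hypothesis that the holomorphic sectional curvature of $H$ is nonpositive gives, via Lemma \ref{l-2.2} (which realizes $K_H$ as a maximum of Gaussian curvatures of complex curves through $f$), the subharmonicity $\frac{\partial^2}{\partial\zeta\partial\overline{\zeta}}\log\mu^2\ge 0$. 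Subharmonicity together with control of $\mu$ on the outer boundary of $A_\ell$ bounds the length of the images of radial slices, so that the image of each $A_\ell$ stays within a bounded $N$-distance of the compact set $f(\{\|z\|^2=t_0\})$ for a fixed $t_0\in(a,1)$. Hence $f(D^2_{a,1})$ has bounded diameter, and since $N$ is complete, Hopf--Rinow yields a compact set $\overline{\Omega}\subset N$ containing the whole image.

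For stages (ii) and (iii) I would consider the graph $\Gamma_f=\{(z,f(z)):z\in D^2_{a,1}\}$, a two-dimensional closed complex submanifold of $D^2_{a,1}\times N$. By stage (i), $\Gamma_f\subset D^2_{a,1}\times\overline{\Omega}$ with $\overline{\Omega}$ compact, so $\Gamma_f$ has locally finite volume near the hole; the extension-of-analytic-sets theorem across the spherical shell (available precisely because the ambient dimension is $\ge 2$) then produces an analytic subset $\widetilde{\Gamma}\subset B^2\times N$ with $\widetilde{\Gamma}\cap(D^2_{a,1}\times N)=\Gamma_f$. Since $\Gamma_f$ is a graph over $D^2_{a,1}$ and the projection $B^2\times N\to B^2$ is proper on $\widetilde{\Gamma}$ (because the $N$-factor stays in $\overline{\Omega}$), $\widetilde{\Gamma}$ maps properly and generically one-to-one onto the normal space $B^2$, hence is the graph of a single holomorphic map $\widetilde{f}:B^2\to N$ extending $f$.

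The main obstacle is stage (i) in the genuinely Finsler, only-nonpositively-curved setting: because $H$ is not Hermitian-quadratic and need not be smooth on the zero section, one cannot compute with a Hermitian curvature tensor, and because the curvature bound is $\le 0$ rather than strictly negative there is no Ahlfors--Schwarz contraction to quote directly. Both difficulties are absorbed into the slicewise subharmonicity supplied by Lemma \ref{l-2.2}, but one must still secure the length bound uniformly as $\ell$ approaches lines tangent to the inner sphere, where $A_\ell$ degenerates; this uniformity, rather than the formal extension in stages (ii)--(iii), is where the real care is needed.
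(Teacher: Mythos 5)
First, a point of order: the paper does not prove this statement --- it is quoted from Shen--Shen \cite{SS} --- so there is no internal proof to compare with; your outline has to stand against the Griffiths--Shiffman argument that \cite{SS} adapts to the Finsler setting. Your architecture (slice, use nonpositive holomorphic sectional curvature via Lemma \ref{l-2.2} to get $\partial_\zeta\partial_{\overline\zeta}\log\mu^2\ge 0$, use completeness for compactness, then extend) is the right one, and the reduction to one variable through Lemma \ref{l-2.2} is exactly what makes the Finsler case no harder than the Hermitian one.

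The genuine gap is in stage (i), and it is not the degeneration of $A_\ell$ that you flag at the end. On an annulus, subharmonicity of $\log\mu^2$ (hence of $\mu$) only makes the circular means of $\mu$ \emph{convex} in $\log|\zeta|$; convexity plus control near the outer boundary gives a \emph{lower} bound, not an upper bound, as you move toward the inner boundary (compare $-\log|\zeta|$ on $A(r,1)$: harmonic, zero on the outer circle, large on the inner one). So ``subharmonicity together with control of $\mu$ on the outer boundary of $A_\ell$'' does not bound the radial image lengths, and the claimed confinement of $f(A_\ell)$ does not follow. (Also, as literally stated, stage (i) is false: $f(D^2_{a,1})$ need not be relatively compact, since nothing prevents $f$ from escaping to infinity as $\|z\|^2\to 1$; one can only hope to confine $f(\{a<\|z\|^2<t_0\})$ for fixed $t_0<1$.) The estimate that actually works is the one for \emph{disks}: there the circular means of the subharmonic density $\mu$ are \emph{nondecreasing}, so the average over $\theta$ of the radial image lengths is at most $\tfrac{1}{2\pi}L\bigl(\varphi(\partial\Delta)\bigr)$, placing the center within that $N$-distance of the boundary image. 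To exploit this you must slice by disks rather than annuli: through every $z^0$ with $a<\|z^0\|^2<t_0$ the affine disk $\{z^0+\zeta v:\ |\zeta|^2\le t_0-\|z^0\|^2\}$ with $\langle v,z^0\rangle=0$ lies entirely inside the shell and has boundary on the compact sphere $\{\|z\|^2=t_0\}$; the disk estimate plus completeness then confines $f(\{a<\|z\|^2<t_0\})$ to a compact set. (Equivalently, one runs the standard open--closed argument over the slices of a Hartogs figure, applying the disk estimate to the already-extended slices.) Once (i) is repaired, your stages (ii)--(iii) are workable but overbuilt: $\overline{B_{\sqrt a}}\times N$ is not an analytic subset, so Bishop's theorem does not apply as invoked and you would need a Rothstein-type continuity theorem for analytic sets; the normal-families route (uniform derivative bounds from the same monotonicity, Arzel\`a--Ascoli in finitely many charts covering $\overline{\Omega}$) reaches the extension with far less machinery.
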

Yang and Zheng \cite{yang} obtained the following rigidity theorem.
\begin{theorem}(\cite{yang})\label{zheng}
Let $M$ be a compact Hermitian manifold of complex dimension $n$ with quasi-negative real bisectional curvature. Let $N$ be a compact complex manifold of complex dimension $n$ which admits a holomorphic fibration
$f:N \rightarrow Z$, where a generic fiber is a compact K\"ahler manifold with $c_1=0$. Then $M$ cannot be bimeromorphic to $N$.
\end{theorem}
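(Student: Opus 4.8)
The plan is to derive a contradiction from the bimeromorphic invariance of the Kodaira dimension $\kappa$. The two hypotheses---the negativity of the curvature of $M$ and the existence of a Calabi--Yau type fibration on $N$---pull $\kappa(M)$ and $\kappa(N)$ in opposite directions, and this incompatibility is what I would exploit. So the proof splits into two essentially independent computations, one for each manifold, followed by a one-line comparison.

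First I would show that the curvature assumption forces $M$ to be of general type, i.e. $\kappa(M)=n$. Quasi-negativity of the real bisectional curvature means that this curvature is nonpositive on all of $M$ and strictly negative at some point. By the Wu--Yau type theorem adapted to the Hermitian real bisectional curvature, nonpositivity yields that the canonical bundle $K_M$ is nef, and the strict negativity at one point upgrades nefness to bigness of $K_M$. Hence $M$ is projective of general type and $\kappa(M)=n$. The analytic engine here is a continuity method for a complex Monge--Amp\`ere equation together with a maximum-principle estimate, morally the global counterpart of the pointwise second-order inequality \eqref{sod} exploited in Section~6.

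Next I would analyze $N$ through its fibration $f\colon N\to Z$. A generic fiber $V=f^{-1}(w)$ is a compact K\"ahler manifold with $c_1=0$, so by Yau's theorem $V$ carries a Ricci-flat K\"ahler metric; consequently $K_V$ is numerically trivial and $\kappa(V)=0$. Applying Iitaka's easy addition theorem to $f$ then gives $\kappa(N)\le \kappa(V)+\dim Z=\dim Z$. Since the fibers are positive-dimensional we have $\dim Z<n$, whence $\kappa(N)\le \dim Z<n$. Finally, if $M$ were bimeromorphic to $N$, then, because the plurigenera $h^0(X,mK_X)$ and hence $\kappa$ are bimeromorphic invariants of compact complex manifolds, we would obtain $\kappa(M)=\kappa(N)$; but $\kappa(M)=n>\dim Z\ge\kappa(N)$, a contradiction. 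Therefore $M$ cannot be bimeromorphic to $N$.

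The main obstacle is plainly the first step: converting the pointwise quasi-negativity of the real bisectional curvature into the global statement that $K_M$ is big. This is the genuinely hard, analytic part of the argument---establishing the Wu--Yau type estimate for the real bisectional curvature, and in particular handling the merely quasi-negative case (negative at a single point rather than uniformly), requires the full complex Monge--Amp\`ere machinery and cannot be reduced to the elementary maximum-principle computation that suffices for the Schwarz lemma of this paper. Everything downstream---Yau's theorem for $V$, easy addition, and the bimeromorphic invariance of $\kappa$---is standard and robust.
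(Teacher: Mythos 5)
Your architecture (find a bimeromorphic invariant that the two hypotheses push in opposite directions) is coherent, but its first step is not available, and it is precisely the step you flag as the hard one. The assertion that quasi-negative real bisectional curvature on a compact \emph{Hermitian} manifold forces $K_M$ to be nef and big is not a known ``Wu--Yau type theorem'': the Wu--Yau continuity method and its quasi-negative refinement by Diverio--Trapani are established in the K\"ahler category for holomorphic sectional curvature, and transporting them to a general Hermitian metric with real bisectional curvature is exactly (part of) Conjecture 1.6 of \cite{Y} --- the very conjecture for which Theorem \ref{zheng} is offered as ``partial evidence'' in the surrounding text. Deducing the theorem from the conjecture it is meant to support is circular, and there is no elementary way to patch this: the Monge--Amp\`ere argument genuinely breaks down in the non-K\"ahler setting. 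The downstream steps (Yau's theorem on the fiber, easy addition for fiber spaces of compact complex varieties, bimeromorphic invariance of the plurigenera and hence of $\kappa$) are standard and fine, modulo the implicit assumption that the fibers are positive-dimensional so that $\dim Z<n$.

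The actual argument, both in \cite{Y} and in the paper's parallel Theorem \ref{bm}, bypasses Kodaira dimension entirely and needs only pointwise maximum-principle input. Assume a bimeromorphic map from $N$ to $M$. Since $M$ is compact with nonpositive curvature, $M$ obeys the Hartogs phenomenon, so the meromorphic map is in fact holomorphic. Restrict it to a generic fiber $V$ of $f:N\rightarrow Z$; by Yau's theorem $V$ carries a Ricci-flat K\"ahler metric, so its Ricci curvature is bounded below by $0$, and the Schwarz lemma of \cite{Y} (source with second Ricci curvature bounded below, target with quasi-negative real bisectional curvature; the analogue here is Corollary \ref{c-6.1}) forces the restriction to be constant, contradicting bimeromorphy. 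This is exactly why the rigidity statement is provable while the bigness of $K_M$ that your proof presupposes remains open.
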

The key step in the proof of Theorem \ref{zheng} is an application of the Hartogs phenomenon \cite{zheng} and the Schwarz lemma (Theorem 4.5 in \cite{yang}). By Theorem 5.2 in \cite{shen}, we know that any complete complex Finsler manifolds with non-positive holomorphic sectional curvature obeys the Hartogs phenomenon. The curvature conditions on $(M,g)$ in Theorem 4.5 in \cite{yang} is that the second (Chern) Ricci curvature is bounded from below by a negative constant $-b$ and the Ricci curvature is bounded from below.
Combining Theorem 5.2 in \cite{shen} with Corollary \ref{c-6.1}, we have the following theorem.

\begin{theorem}\label{bm}
Let $M$ be a compact complex manifold of $n=\dim_{\mathbb{C}}M\geq 2$ which admits a strongly pseudoconvex Finsler metric $G$  with negative  holomorphic sectional curvature. Let $N$ be a compact complex manifold of the same complex dimension $n$ which admits a holomorphic fibration $f:N \rightarrow Z$, where a generic fiber is a compact K\"ahler manifold with holomorphic sectional curvature bounded from below by a nonnegative constant. Then $M$ cannot be bimeromorphic to $N$.
\end{theorem}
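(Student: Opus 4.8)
The plan is to argue by contradiction, following the strategy behind Theorem \ref{zheng}: combine the Hartogs extension property of $M$ with the vanishing Schwarz lemma (Corollary \ref{c-6.1}) to force a generic fiber of $f$ to collapse to a point, contradicting the generic injectivity of a bimeromorphic map. So suppose, to the contrary, that $M$ is bimeromorphic to $N$, and fix a bimeromorphic map $\Phi:N\dashrightarrow M$. Then there exist analytic subsets $A\subset N$ and $B\subset M$ such that $\Phi$ restricts to a biholomorphism $N\setminus A\to M\setminus B$; in particular $\Phi$ is injective off $A$.

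First I would observe that $M$ obeys the Hartogs phenomenon. Indeed, $M$ is compact, hence complete, and its strongly pseudoconvex Finsler metric $G$ has negative --- in particular non-positive --- holomorphic sectional curvature, so Theorem \ref{T-3.3} applies. Next, choose a generic value $w\in Z$ so that the fiber $F:=f^{-1}(w)$ is a positive-dimensional compact K\"ahler manifold with holomorphic sectional curvature bounded from below by a non-negative constant and with $F\not\subset A$. Then $\Phi$ restricts to a meromorphic map $\Phi|_F:F\dashrightarrow M$, which is holomorphic off an indeterminacy set $I\subset F$ of codimension at least $2$. Working in local coordinates about each point of $I$, a punctured neighborhood contains a spherical shell of the form $D^2_{a,1}$ on which $\Phi|_F$ is holomorphic; since $M$ obeys the Hartogs phenomenon, $\Phi|_F$ extends across $I$. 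This produces a genuine holomorphic map $g:F\to M$ extending $\Phi|_F$.

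Now I would invoke the Schwarz lemma. The domain $F$ is a compact K\"ahler manifold with non-negative holomorphic sectional curvature, and by Remark \ref{r-1.1} the pole and lower sectional-curvature hypotheses are unnecessary in the compact case; moreover, by compactness the negative holomorphic sectional curvature of $M$ admits a negative upper bound $K_2<0$. Hence Corollary \ref{c-6.1} applies to $g:F\to M$ and forces $g$ to be constant. On the other hand, $g$ coincides with the biholomorphism $\Phi$ on the dense open set $F\setminus(I\cup A)$, which is positive-dimensional, so $g$ is injective there and cannot be constant. This contradiction shows that $M$ cannot be bimeromorphic to $N$.

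The main obstacle is the indeterminacy-removal step: one must check carefully that the Hartogs phenomenon enjoyed by $M$ genuinely upgrades the meromorphic restriction $\Phi|_F$ to a holomorphic map $F\to M$ across the codimension-$\geq 2$ locus $I$, and that the generic fiber $F$ can be chosen simultaneously K\"ahler, of positive dimension, and not contained in $A$. Once the holomorphic extension $g$ is available, the curvature hypotheses feed directly into Corollary \ref{c-6.1}, and the desired contradiction is immediate.
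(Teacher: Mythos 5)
Your proposal is correct and follows essentially the same route as the paper's proof: use Theorem \ref{T-3.3} (Hartogs phenomenon for $M$) to upgrade the restriction of the bimeromorphic map to a generic fiber to a genuine holomorphic map, note that compactness turns negativity of $K_G$ into a negative upper bound, and then apply Corollary \ref{c-6.1} (via Remark \ref{r-1.1} for the compact K\"ahler fiber) to force constancy, contradicting generic injectivity. You are in fact somewhat more careful than the paper about the indeterminacy-removal step and about choosing the fiber positive-dimensional and not contained in the exceptional locus.
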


\begin{proof}
The proof essentially goes the same lines as in \cite{yang}. Assume on the contrary that there is a bimeromorphic map $f$ from $N$ into $M$. Since $M$ is compact and the holomorphic sectional curvature $K_G$ of $G$ is negative, by Theorem \ref{T-3.3}, $M$ obeys the
Hartogs phenomenon. So any meromorphic map into $M$ must be holomorphic. Let $U \subseteq M$ be the open set where the holomorphic sectional
curvature $K_G$ of $G$ is negative. Since $M$ is compact and $K_G$ is negative, it follows that $K_G$ is bounded from above by a negative constant. Let $Y$ be a generic fiber of $N$ such that the restriction map $f|_Y: Y \rightarrow M$ is a non-constant map and its image intersect $U$. By assumption, $Y$ is a compact K\"ahler manifold whose holomorphic sectional curvature is bounded from below by a non-negative constant. Thus by  Corollary \ref{c-6.1},  $f|_Y$ is necessary a constant map, which is a contradiction. So that $M$ cannot be bimeromorphic to $N$.
\end{proof}

\begin{remark} In the above theorem, the assumption of the curvature condition on $(M,G)$ is somewhat stronger than that of Theorem \ref{zheng}. The requirement of the generic fiber $f:N\rightarrow M$ satisfying $c_1=0$ in Theorem \ref{zheng} is replaced with the condition that the holomorphic sectional curvature of the K\"ahler metric of the generic fiber $f:N\rightarrow M$ is bounded from below by a non-negative constant.

We exclude the case $\dim_{\mathbb{C}}M=1$ in Theorem \ref{bm} because any strongly pseudoconvex complex Finsler metric $G$ on a complex manifold $M$ of $\dim_{\mathbb{C}}M=1$ is necessary a Hermitian metric. In this case, Theorem \ref{bm} says nothing more than Theorem \ref{zheng}.
\end{remark}

{\bf Acknowledgement:}{The authors thank the referees for carefully reading the manuscript and their valuable corrections/suggestions which improved the presentation of the paper. This work was supported by National Natural Science Foundation of China (Grant No. 12071386, No. 11671330,  No. 11971401).}

\end{document}